\documentclass[12pt]{article}
 
\newcommand{\ignore}[1]{}

\usepackage{amsmath, amsthm, amsfonts, amssymb} 

\usepackage[dvips,final]{graphicx}

\usepackage{graphicx}

\usepackage{float}
\newfloat{figure}{H}{lof}
\floatname{figure}{\figurename}

\usepackage[colorlinks=true, urlcolor=blue,linkcolor=blue, citecolor=blue]{hyperref}

\numberwithin{figure}{section}

\DeclareMathAlphabet{\eufrak}{U}{}{}{}  
\SetMathAlphabet\eufrak{normal}{U}{euf}{m}{n}
\SetMathAlphabet\eufrak{bold}{U}{euf}{b}{n}

\numberwithin{equation}{section}

\newenvironment{Proof}{\removelastskip\par\medskip
\noindent{\em Proof.}
\rm}{\hfill$\square$\\\par\medbreak}

\def\trace{{\mathord{{\rm trace ~}}}}

\def\real{{\mathord{{\rm I\kern-2.8pt R}}}}        
\def\inte{{\mathord{{\rm I\kern-2.8pt N}}}}
\def\PP{{\mathord{{\rm I\kern-2.8pt P}}}}

\def\real{{\mathord{\mathbb R}}}

\def\inte{{\mathord{\mathbb N}}}

\def\Dom{{\mathrm{{\rm Dom}}}}

\def\grad{{\mathrm{ \ \! {\rm grad}}}}
\def\crb{{\mathrm{{\rm R}}}}

\newcommand{\disp}{\displaystyle}

\newcommand{\vide}{{}}

\def\P{\mathbb{P}}
\def\E{\mathop{\hbox{\rm I\kern-0.20em E}}\nolimits}

\newtheorem{prop}{Proposition}[section]

\newtheorem{lemma}[prop]{Lemma}
\newtheorem{definition}[prop]{Definition}

\newtheorem{remark}[prop]{Remark}
\def\Dom{\rm Dom \ \! }
\def\trace{{\mathrm{{\rm trace}}}}

\def\var{{\mathrm{{\rm Var}}}}

\def\argmax{{\mathrm{{\rm argmax}}}}

\textwidth15.3cm 
\textheight21.5cm 

\oddsidemargin0.5cm
\evensidemargin0.5cm 
\topmargin1cm 

\headheight0cm 
\headsep0cm
\baselineskip1in
\parindent0.2in

\title{ 
\Huge 
 Stochastic analysis on Gaussian space applied to drift estimation 
} 
 
\author 
{ 
Nicolas Privault\footnote{nicolas.privault@math.univ-poitiers.fr} 
\\ 
Laboratoire de Math\'ematiques 
\\ 
Universit\'e de Poitiers 
\\ 
T\'el\'eport 2 - BP 30179 
\\ 
86962 Chasseneuil Cedex 
\\ 
France 
\and 
Anthony R\'eveillac\footnote{anthony.reveillac@univ-lr.fr}  
\\ 
Laboratoire de Math\'ematiques 
\\ 
Universit\'e de La Rochelle 
\\ 
Avenue Michel Cr\'epeau 
\\ 
17042 La Rochelle Cedex
\\ 
France 
} 

\date{august 29, 2007}

\allowdisplaybreaks

\begin{document}
\hyphenation{func-tio-nals} 
\maketitle
 
\begin{abstract} 
 In this paper we consider the nonparametric 
 functional estimation of the drift 
 of Gaussian processes using Paley-Wiener 
 and Karhunen-Lo\`eve expansions. 
 We construct efficient estimators for 
 the drift of such processes, 
 and prove their minimaxity using Bayes estimators. 
 We also construct superefficient estimators of Stein type 
 for such drifts using the Malliavin integration by parts formula 
 and stochastic analysis on Gaussian space, in which superharmonic 
 functionals of the process paths play a particular role. 
 Our results  are illustrated by numerical simulations 
 and extend the construction of James-Stein type estimators 
 for Gaussian processes by Berger and Wolpert \cite{berger1}. 
\end{abstract} 
 
\normalsize

\vspace{0.5cm}

\small \noindent {\bf Key words:} 
 Nonparametric drift estimation, Stein estimation, Gaussian space, 
 Malliavin calculus, harmonic analysis. 
\\ 
{\em Mathematics Subject Classification:} 62G05, 60H07, 31B05. 

\normalsize

\baselineskip0.7cm

\section{Introduction} 

\noindent 
 The maximum likelihood estimator $\hat{\mu}$ of 
 the mean $\mu\in \real^d$ of a Gaussian random vector $X$ in $\real^d$ 
 with covariance $\sigma^2 {\rm I}_{\real^d}$ under a probability $\P_\mu$ 
 is well-known to be equal to $X$ itself, 
 and can be computed by maximizing the likelihood ratio 
$$ 
\frac{1}{(2\pi\sigma^2)^{d/2}} e^{-\frac{\Vert X-m \Vert^2_d}{2\sigma^2}}
$$ 
 with respect to $m$, 
 where $\Vert \cdot \Vert_d$ denotes the Euclidean norm on $\real^d$. 
 It is efficient in the sense that it attains the Cramer-Rao bound
$$ 
 \sigma^2 d 
 = 
 \E_\mu [ \Vert X - \mu \Vert_d^2 ] 
 = \inf_Z 
 \E_\mu [ \Vert Z - \mu \Vert_d^2 ] 
, \qquad 
 \mu \in \real^d, 
$$ 
 over all unbiased estimators $Z$ satisfying 
 $\E_\mu [Z] = \mu$, for all $\mu \in \real^d$. 
\\ 
 
\noindent 
 In \cite{jamesstein}, James and Stein have constructed 
 superefficient estimators for the mean of $X \in \real^d$, 
 of the form 
$$ 
 \left( 
 1- \frac{d-2}{\Vert X \Vert_d^2} 
 \right) 
 X 
$$ 
 whose risk is lower than the Cramer Rao bound $\sigma^2 d$ 
 in dimension $d\geq 3$. 
\\ 

\noindent 
 Drift estimation for Gaussian processes is of interest in several fields of 
 application. 
 For example in the decomposition 
$$ 
 X_t=X^u_t+u_t, \qquad t \in [0,T], 
$$ 
 the process $(X_t)_{t\in [0,T]}$ is interpreted as 
 an observed output signal, the drift $(u_t)_{t \in [0,T]}$ 
 is viewed as an input signal to be estimated and perturbed by a 
 centered Gaussian noise $(X^u_t)_{t \in [0,T]}$, 
 cf. e.g. \cite{IbragimovRozanov}, Ch.~VII. 
 Such results find applications in e.g. telecommunication 
 (additive Gaussian channels) and finance (identification of market trends). 
\\ 

\noindent 
 Berger and Wolpert \cite{berger1}, \cite{berger2}, have constructed estimators 
 of James-Stein type for the drift of a Gaussian process $(X_t)_{t\in [0,T]}$ 
 by applying the James-Stein procedure to the independent Gaussian random variables 
 appearing in the Karhunen-Lo\`eve expansion of the process. 
 In this context, $\hat{u} : = (X_t)_{t\in \real_+}$ is seen as a minimax 
 estimator of its own drift $(u_t)_{t\in \real_+}$. 
\\ 

\noindent 
 Stein~\cite{stein} has shown that the James-Stein estimators on $\real^d$ 
 could be extended to a wider family of estimators, using integration by parts for 
 Gaussian measures. 
 Let us briefly recall Stein's argument, which relies on integration by parts
 with respect to the Gaussian density 
 and on the properties of superharmonic functionals for the Laplacian on $\real^d$. 
 Given an estimator of $\mu \in \real^d$ of the form $X+g(X)$, where 
 $g: \real^d \to \real^d$ is sufficiently smooth,
 and applying the integration by parts formula 
\begin{equation} 
\label{ibp2} 
 \E_\mu [(X_i - \mu_i ) g_i (X)] = 
 \sigma^2 
 \E_\mu [\partial_i g_i (X)], 
\end{equation} 
 $g = \sigma^2 \grad \log f = \sigma^2 ( \partial_1 \log f , \ldots , \partial_d \log f )$, 
 one obtains 
$$ 
\E_\mu [\Vert X + 
 \sigma^2 \grad \log f (X) 
 - 
 \mu \Vert_d^2] = \sigma^2 d 
 + 
 4 \sigma^4 
 \sum_{i=1}^d 
 \E_\mu \left[ \frac{ 
 \partial^2_i \sqrt{f} (X)}{\sqrt{f} (X)} \right] 
, 
$$ 
 i.e. $X+ \sigma^2 \grad \log f (X)$ is a superefficient estimator if 
$$ 
 \sum_{i=1}^d \partial^2_i \sqrt{f} (x) < 0, \qquad dx-a.e. 
, 
$$ 
 which is possible if $d\geq 3$. 
 In this case, $X+ \sigma^2 \grad \log f (X)$ improves in the mean square sense
 over the efficient estimator $\hat{u}$ which 
 attains the Cramer-Rao bound $\sigma^2 d$ on unbiased estimators of $\mu$. 
\\ 
 
\noindent 
 In this paper we present an extension of Stein's argument to an 
 infinite-dimensional setting using the Malliavin integration by 
 parts formula, with application to the construction of Stein type 
 estimators for the drift of a Gaussian process $(X_t)_{t\in [0,T]}$. 
 Our approach applies to Gaussian processes such as Volterra processes 
 and fractional Brownian motions. 
 It also extends the results of Berger and Wolpert \cite{berger1} 
 in the same way that the construction of Stein~\cite{stein} extends that 
 of James and Stein~\cite{jamesstein}, 
 and this allows us to recover the estimators of James-Stein type 
 introduced by Berger and Wolpert \cite{berger1} as particular cases. 
 Here we replace the Stein equation 
 \eqref{ibp2} with the integration by parts formula of the Malliavin 
 calculus on Gaussian space. Our estimators are given by processes of the 
 form 
$$ 
 X_t + D_t \log F, \qquad 
 t\in [0,T] 
, 
$$ 
 where $F$ is a positive superharmonic random variable on 
 Gaussian space and $D_t$ is the Malliavin derivative indexed by $t\in [0,T]$. 
 In contrast to the minimax estimator $\hat{u}$, such estimators 
 are not only biased but also anticipating with respect to the Brownian filtration 
 $({\cal F}_t)_{t\in [0,T]}$. 
 This however poses no problem when one has access to complete paths 
 from time $0$ to $T$. 
\\ 

\noindent 
 For large values of $\sigma$ it can be shown that the percentage 
 gain of this estimator is at least equal to the universal constant 
\begin{equation} 
\label{gsint1}
\frac{16}{\pi^4} 
 \int_{\real^4} 
 e^{-\frac{x^2+y^2+z^2+r^2}{2}} 
 \frac{ dx dy dz dr}{x^2+9y^2+25z^2+49r^2} 
\end{equation} 
 which approximately represents $11.38\%$, see \eqref{gsint} below. 
\\ 
 
\noindent 
 We proceed as follows. 
 In Section~\ref{2} we use stochastic calculus in 
 the independent increment case to derive a Cramer-Rao bound over 
 all unbiased drift estimators. 
 This bound is attained by the process $\hat{u}:= (X_t)_{t\in [0,T]}$, 
 which will be considered as an efficient drift estimator. 
 In Section~\ref{2.0} we compute the Bayes estimators obtained 
 under prior Gaussian distributions. 
 We show that these Bayes estimators are admissible, and use 
 them to prove that the drift estimator $\hat{u}$ is minimax. 
 The tools and results presented in Sections~\ref{2} and \ref{2.0} 
 are not surprising, but we did not find any source covering them in the literature. 
 In Section~\ref{2.1} we recall the elements of
 analysis and integration by parts on Gaussian space 
 which will be needed in Section~\ref{3} 
 to construct superefficient drift estimators for Gaussian processes 
 using superharmonic random functionals on Gaussian space. 
 The superefficiency of these estimators will show, as in the 
 classical case, that the minimax estimator $\hat{u}$ is not admissible.  
 In Section~\ref{app} we give examples of nonnegative superharmonic
 functionals using cylindrical functionals and potential theory on Gaussian space. 
 Examples are considered in Section~\ref{4} in case $u$ is deterministic. 
 We show that the James-Stein estimators of Berger and Wolpert \cite{berger1} 
 can be recovered as particular cases in our approach, 
 and we provide numerical simulations for the gain of such estimators. 
 It turns out that in those examples,
 the gain obtained in comparison with the minimax 
 estimator $(X_t)_{t\in [0,T]}$ 
 is a function of $\sigma^2/T$, thus making $\sigma$ and $T$ play 
 inverse roles, unlike in the usual setting of Brownian rescaling. 
\\ 

\noindent 
 This paper is an extended version of \cite{pr3} and provides 
 proofs of the results presented in \cite{p-r-c}.  
\section*{Notation} 
 Let $T>0$. 
 Consider a real-valued centered Gaussian process $(X_t)_{t\in [0,T]}$ 
 with covariance function 
$$ 
 \gamma (s,t) = \E [ X_s X_t ], \qquad s,t\in [0,T], 
$$ 
 on a probability space $(\Omega , {\cal F} , \P )$, 
 where ${\cal F}$ is the $\sigma$-algebra generated by $X$. 
 Recall that $(X_t)_{t\in [0,T]}$ can be represented in different ways 
 as an isonormal Gaussian process 
 on a real separable Hilbert space $H$, i.e. as an isometry 
 $X : H \to L^2 (\Omega , {\cal F} , P)$ such that 
 $\{ X (h) \ : \ h\in H\}$ is a family of centered 
 Gaussian random variables satisfying 
$$\E 
 [ X (h) X (g) ] 
 = \langle h , g \rangle_H, 
 \qquad
 h,g \in H 
, 
$$ 
 where $\langle \cdot , \cdot \rangle_H$ and $\Vert \cdot \Vert_H$
 denote the scalar product and norm on $H$. 
\\ 
 
\noindent 
 One can distinguish two main types of such isonormal representations 
 of $X_t$, see e.g. \cite{amn} and \cite{berger1} respectively for details. 
\begin{description} 
\item{\bf (A)} 
 Paley-Wiener expansions. 
 In this case, $H$ is the completion 
 of the linear space generated by the functions 
 $\stackrel{}{\chi}_t \! \! (s) =\min (s,t)$, $s,t\in [0,T]$, 
 with respect to the norm  
$$ 
 \langle 
 \stackrel{}{\chi}_t 
 , 
 \stackrel{}{\chi}_s \rangle_H : = \gamma (s,t)
, 
 \qquad 
 s, t \in [0,T] 
, 
$$ 
 and $ X (\cdot )$ is constructed on $H$ from 
 $X ( \stackrel{}{\chi}_t ) : = X_t$, $t\in [0,T]$, 
 i.e. we have 
$$ 
 X ( \stackrel{}{\chi}_t ) 
 = 
 \sum_{k=0}^\infty 
 \langle 
 \stackrel{}{\chi}_t 
 , 
 h_k (t) 
 \rangle_H 
 X (h_k) 
, \qquad 
 t\in [0,T], 
$$ 
 for any orthonormal basis $(h_k)_{k\in\inte}$ of $H$. 
 Assume in addition $\gamma (s,t)$ has the form 
$$ 
 \gamma (s,t) = \int_0^{s\wedge t} K(t,r)K(s,r) dr 
, 
 \qquad 
 s,t\in [0,T] 
, 
$$ 
 where $K(\cdot , \cdot )$ is a deterministic kernel and 
$$ 
 (Kh)(t) 
 : = 
 \int_0^t K(t,s) \dot{h} (s) ds 
$$ 
 is differentiable in $t\in [0,T]$, 
 and let $K^*$ denote the adjoint of $K$ with respect to 
$$ 
 \langle h , g \rangle 
 : = \langle \dot{h} , \dot{g} \rangle_{L^2([0,T] , dt )}. 
$$ 
 The scalar product in $H$ then satisfies 
$$ 
 \langle h , g \rangle_H 
 = \langle K^* h , K^* g \rangle 
 = \langle h , \Gamma g \rangle 
, 
$$ 
 where $\Gamma = K K^*$, and we have the decomposition 
$$ 
 X_t 
 = 
 \sum_{k=0}^\infty 
 \langle 
 \stackrel{}{\chi}_t 
 , 
 h_k 
 \rangle_H 
 X (h_k) 
 = \sum_{k=0}^\infty 
 \langle 1_{[0,t]} 
 , 
 \dot{\Gamma} h_k 
 \rangle_{L^2([0,T] , dt )} 
 X (h_k) 
 = \sum_{k=0}^\infty 
 \Gamma h_k (t) 
 X (h_k) 
,
$$ 
 $t\in [0,T]$. 
 In this case we also have the representation 
$$ 
 X_t 
 = \int_0^t K(t,s) dW_s, 
 \qquad 
 t\in [0,T], 
$$ 
 where $(W_s)_{s\in [0,T]}$ is a standard Brownian 
 motion, cf. \cite{amn}. 
\item{\bf (B)} 
 Karhunen-Lo\`eve expansions. This framework is used in \cite{berger1}. 
 In this case, $\mu$ is a finite Borel measure on $[0,T]$ and 
 $H$ is defined from 
$$ 
 \langle 
 h 
 , 
 g 
 \rangle_H 
 = 
 \langle 
 h 
 , 
 \Gamma 
 g 
 \rangle 
, 
$$ 
 where 
$$\langle h , g \rangle 
 : = 
 \langle h , g \rangle_{L^2([0,T],d\mu )} 
, 
$$ 
 and  
$$ 
 ( \Gamma g ) (t) 
 = 
 \int_0^T g(s) \gamma (s,t) \mu (ds ), 
\qquad 
 t\in [0,T] 
, 
$$ 
 with 
$$ 
 X (h) = \int_0^T X_s h(s) \mu (ds), \qquad h\in H 
. 
$$ 
 Given $(h_k)_{k\in\inte}$ an orthonormal basis of 
 $L^2([0,T],d\mu)$, we have the expansion 
$$X_t = \sum_{k=0}^\infty h_k (t) X (h_k) 
, 
 \qquad 
 t\in [0,T] 
. 
$$ 
\end{description} 
 In the sequel we will use mainly the framework $(A)$ with 
 $\langle h , g \rangle = \langle \dot{h} , \dot{g} \rangle_{L^2([0,T] , dt )}$, 
 which is  
 better adapted to our approach, although some results valid 
 in the general framework of Gaussian processes will be valid 
 for $(B)$ as well. 
\\ 

\noindent 
 The Girsanov theorem for Gaussian processes, 
 cf. e.g. \cite{nualartm2}, states that $X^u$ defined as 
$$ 
 X^u(g) : = X(g) - \langle g , u \rangle 
 = X(g) - \langle g , \Gamma^{-1} u \rangle_H, 
 \qquad g\in H, 
$$ 
 where $u\in H$ is deterministic, 
 has same law as $X$ under the probability $\P_u$ defined by 
$$ 
 \frac{d\P_u}{d\P} 
 = \exp \left( X( \Gamma^{-1} u ) - \frac{1}{2} 
 \Vert \Gamma^{-1} u \Vert_H^2 
 \right) 
. 
$$ 
 In other terms, in case $(A)$ we have 
\begin{eqnarray*} 
 X_t - u(t) 
 & = & 
 X_t - \Gamma \Gamma^{-1} u(t) 
\\ 
 & = & 
 \sum_{k=0}^\infty 
 \Gamma h_k (t) 
 ( X(h_k) - \langle h_k, \Gamma^{-1} u\rangle_H ) 
\\ 
 & = & 
 \sum_{k=0}^\infty 
 \Gamma h_k (t) X^u (h_k) 
, \qquad 
 t\in [0,T] 
, 
\end{eqnarray*} 
 where $(h_k)_{k\in\inte}$ is orthonormal basis of $H$, 
 and in case $(B)$, 
\begin{eqnarray*} 
 X_t - u(t) 
 & = & 
 \sum_{k=0}^\infty 
 h_k (t) 
 ( 
 X( h_k) - \langle h_k, u \rangle_{L^2([0,T],d\mu)} 
 ) 
\\ 
 & = & 
 \sum_{k=0}^\infty 
 h_k (t) 
 ( 
 X(h_k) - \langle h_k, \Gamma^{-1} u \rangle_H 
 ) 
\\ 
 & = & 
 \sum_{k=0}^\infty 
 h_k (t) X^u (h_k) 
, \qquad 
 t\in [0,T] 
, 
\end{eqnarray*} 
 where $(h_k)_{k\in\inte}$ is orthonormal basis of 
 $L^2([0,T],d\mu)$. 
\section{Efficient drift estimator} 
\label{2} 
 Here we work in the framework of $(A)$, in the particular case where 
 $(X_t)_{t\in [0,T]}$ has independent increments, 
 i.e. 
$$\gamma (s,t) = \int_0^{s\wedge t} \sigma^2_u du 
, 
$$ 
 where $\sigma \in L^2([0,T] , dt )$ is an a.e. non-vanishing function, 
$$(\dot{K} h)(t) = (\dot{K}^* h )(t) = \sigma_t \dot{h} (t), \qquad 
 t\in [0,T], 
$$ 
 with $K(t,r) = 1_{[0,t]} (r) \sigma_r$ and 
$$ 
 \Gamma h (t) = \int_0^t \dot{h}_s \sigma_s^2 ds, 
 \qquad 
 t\in [0,T]. 
$$ 
 In other terms, $(X_t )_{t \in [0,T]}$ is a continuous Gaussian martingale 
 with 
 quadratic variation $\sigma^2_t dt$, 
 which can be represented as the time change 
$$X_t = W_{\int_0^t \sigma^2_s ds}, \qquad 
 t\in [0,T], 
$$ 
 of the standard Brownian motion $(W_t)_{t\in \real_+}$, 
 or as the stochastic integral process $\displaystyle 
 X_t = \int_0^t \sigma_s dW_s$, $t\in [0,T]$, and we have 
 $\displaystyle X (h) = \int_0^T \dot{h}(s) dX_s$, 
 $h \in H$, where 
$$H 
 = \left\{ 
 v : [0,T] \to \real
 \ : \ 
 v (t) = \int_0^t \dot{v}(s) ds,
 \ t\in [0,T], 
 \ 
 \dot{v} \in L^2 ([0,T], \sigma^2_t dt)
 \right\}$$ 
 is the Cameron-Martin space with inner product 
$$ 
 \langle v_1 , v_2 \rangle_H 
 = 
 \int_0^T \dot{v}_1 (s) \dot{v}_2 (s) \sigma^2_s ds, 
 \qquad v_1, v_2 \in H 
. 
$$ 
 
\noindent 
 Let $({\cal F}_t)_{t\in [0,T]}$ denote the filtration generated 
 by $(X_t)_{t\in [0,T]}$, and for $u$ an ${\cal F}_t$-adapted process, 
 let $\P^\sigma_u$ denote the translation of the Wiener measure 
 on $\Omega$ by $u$, i.e. $\P^\sigma_u$ is the measure on $\Omega$ under which 
$$X^u_t : = X_t - u_t 
 , \qquad t \in [0,T],
$$ 
 is a continuous Gaussian martingale with quadratic variation 
$$d\langle X^u , X^u \rangle_t = \sigma^2_t dt. 
$$ 
 Consider $u$ an ${\cal F}_t$-adapted processes of the form 
$$ 
 u_t = \int_0^t \dot{u}_s ds, \qquad t\in [0,T] 
, 
$$ 
 with 
$$ 
 \E^\sigma 
 \left[ 
 \int_0^T \frac{\dot{u}^2_s}{\sigma^2_s} ds 
 \right] 
 < \infty 
. 
$$ 
 By the Girsanov theorem, $\P^\sigma_u$ is absolutely continuous with 
 respect to $\P^\sigma$, with 
$$d\P^\sigma_u = \Lambda ( u ) d\P^\sigma, 
$$ 
 where 
$$ 
 \Lambda ( u )
 : = 
 \exp \left( 
 \int_0^T 
 \frac{\dot{u}_s}{\sigma^2_s} 
 dX_s 
 - 
 \frac{1}{2} \int_0^T \frac{\dot{u}_s^2}{\sigma^2_s} ds \right)
$$ 
 denotes the Girsanov-Cameron-Martin density, 
 the canonical process 
 $(X_t)_{t\in [0,T]}$ becomes a continuous Gaussian semimartingale 
 under $\P^\sigma_u$, 
 with quadratic variation $\sigma^2_t dt$ and  drift $\dot{u}_t dt$. 
 The expectation under $\P_u$ will be denoted by $\E_u$. 
\begin{definition} 
 A drift estimator $\xi$ is called unbiased if 
$$ 
 \E_u [\xi_t]= \E_u [u_t], \quad t \in [0,T], 
$$ 
 for all square-integrable ${\cal F}_t$-adapted process 
 $(u_t)_{t\in [0,T]}$. 
 It is called adapted if the process $(\xi_t)_{t\in [0,T]}$ is 
 ${\cal F}_t$-adapted. 
\end{definition} 
\noindent 
 Here, the canonical process $(X_t)_{t\in [0,T]}$ will be considered 
 as an unbiased estimator of own its drift $(u_t)_{t\in [0,T]}$ 
 under $\P^\sigma_u$, with risk defined as 
$$ 
 \E^\sigma_u \left[
 \Vert X - u \Vert_{L^2([0,T] , d\mu )}^2
 \right]
 = 
 \int_0^T \E^\sigma_u 
 \left[
 | X^u_t |^2 \right]
 \mu ( dt ) 
 = 
 \int_0^T \int_0^t \sigma^2_s ds 
 \mu ( dt ) 
, 
$$ 
 where $\mu$ is a finite Borel measure on $[0,T]$. 
 Clearly this estimator is consistent as $\sigma$ or $T$ tend to $0$: 
 precisely, given $N$ independent samples 
$$(X_t^1 )_{t\in [0,T]}, \ldots , (X_t^N )_{t\in [0,T]}, 
$$ 
 of $(X_t)_{t\in [0,T]}$, the process 
\begin{equation} 
\label{xbar} 
 \bar{X}_t 
 : = \frac{ 
 X_t^1 + \cdots + X_t^N 
 }{N} 
, 
 \qquad  
 t\in [0,T],  
\end{equation} 
 is an unbiased estimator of $(u_t)_{t\in [0,T]}$ whose risk 
$$ 
 \E^{\sigma/\sqrt{N}}_u 
 \left[
 \Vert \bar{X} - u \Vert_{L^2([0,T] , d\mu )}^2
 \right] 
 = 
 \frac{1}{N} 
 \int_0^T \int_0^t \sigma^2_s ds  \mu ( dt ) 
$$ 
 converges to zero as $N$ goes to infinity. 
\\ 

\noindent 
 The justification of the use of $\hat{u}=(X_t)_{t\in [0,T]}$ as an 
 efficient estimator 
 comes from the following proposition which 
 allows us to compute a Cramer-Rao bound attained by $\hat{u}$. 
 Here the parameter space is restricted to the space 
 of adapted processes in $L^2 (\Omega \times [0,T] , \P \otimes \mu )$, 
 which corresponds in a sense to a parametric estimation. 
\begin{prop} 
\label{prop:CramerRaoBound} 
 {Cramer-Rao inequality.} 
 For any unbiased and adapted estimator $\xi$ of 
 $u$ we have 
\begin{equation} 
\label{11}
 \E^\sigma_u \left[ \int_0^T |\xi_t - u_t|^2  \mu ( dt ) \right] 
 \geq 
 \crb (\sigma,\mu, \hat{u} )  
,
\end{equation}
 where $u \in L^2 (\Omega \times [0,T] , \P_u^\sigma \otimes \mu )$ 
 is adapted and the Cramer-Rao type bound 
$$ 
 \crb (\sigma,\mu, \hat{u} )  
 := \int_0^T \int_0^t \sigma^2_s ds \mu ( dt ) 
$$ 
 is independent of $u$ and attained by the 
 efficient estimator $\hat{u}=X$.
\end{prop}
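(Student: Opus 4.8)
\noindent
The plan is to reduce \eqref{11} to the pointwise estimate $\E^\sigma_u[|\xi_t-u_t|^2]\geq\int_0^t\sigma^2_s\,ds$ for $\mu$-almost every $t\in[0,T]$, and then to integrate against $\mu(dt)$ by Tonelli's theorem; the right-hand side $\crb(\sigma,\mu,\hat u)$ is manifestly free of $u$, and it is attained by $\hat u=X$ because under $\P^\sigma_u$ the process $X^u=X-u$ is a centered martingale with $\langle X^u\rangle_t=\int_0^t\sigma^2_s\,ds$, so that $\E^\sigma_u[|X_t-u_t|^2]=\int_0^t\sigma^2_s\,ds$. For the lower bound I would first discard trivial cases: if $\E^\sigma_u[|\xi_t-u_t|^2]=\infty$ there is nothing to prove, while the standing hypothesis $u\in L^2(\Omega\times[0,T],\P^\sigma_u\otimes\mu)$ forces $\E^\sigma_u[|u_t|^2]<\infty$ for $\mu$-a.e. $t$, so I may assume $\xi_t\in L^2(\P^\sigma_u)$.

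The heart of the proof is an infinite-dimensional Cramer--Rao argument obtained by perturbing the reference drift $u$ and differentiating the unbiasedness identity. Fix $t$ with $c_t:=\int_0^t\sigma^2_s\,ds>0$ (the case $c_t=0$ being trivial) and take $v$ to be the deterministic drift $v_r=\int_0^{r\wedge t}\sigma^2_s\,ds$, i.e. $\dot v_s=\sigma^2_s\mathbf 1_{[0,t]}(s)$ — the choice that will make the ensuing Cauchy--Schwarz step sharp. Since $u+\varepsilon v$ is again a square-integrable $({\cal F}_t)$-adapted drift, I would compute, by dividing the two Girsanov densities $\Lambda(u+\varepsilon v)$ and $\Lambda(u)$ relative to $\P^\sigma$, that $d\P^\sigma_{u+\varepsilon v}/d\P^\sigma_u=L_\varepsilon:=\exp\bigl(\varepsilon\int_0^T(\dot v_s/\sigma^2_s)\,dX^u_s-\tfrac{\varepsilon^2}{2}\int_0^T(\dot v^2_s/\sigma^2_s)\,ds\bigr)=\exp(\varepsilon X^u_t-\tfrac{\varepsilon^2}{2}c_t)$, using $\int_0^T(\dot v_s/\sigma^2_s)\,dX^u_s=\int_0^t dX^u_s=X^u_t$. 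Applying unbiasedness at both $u$ and $u+\varepsilon v$, and using $\E^\sigma_u[L_\varepsilon]=1$ together with the determinism of $v_t$, gives $\E^\sigma_u[(\xi_t-u_t)L_\varepsilon]=\varepsilon v_t$; subtracting the $\varepsilon=0$ identity $\E^\sigma_u[\xi_t-u_t]=0$ and dividing by $\varepsilon$ yields $\E^\sigma_u\bigl[(\xi_t-u_t)(L_\varepsilon-1)/\varepsilon\bigr]=v_t$ for every $\varepsilon\neq0$.

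Letting $\varepsilon\to0$ under the expectation, $(L_\varepsilon-1)/\varepsilon\to X^u_t$, so $\E^\sigma_u[(\xi_t-u_t)X^u_t]=v_t=c_t=\E^\sigma_u[|X^u_t|^2]$, and then the Cauchy--Schwarz inequality gives $c_t^2\leq\E^\sigma_u[|\xi_t-u_t|^2]\,\E^\sigma_u[|X^u_t|^2]=c_t\,\E^\sigma_u[|\xi_t-u_t|^2]$, hence $\E^\sigma_u[|\xi_t-u_t|^2]\geq c_t=\int_0^t\sigma^2_s\,ds$. Integrating this over $t$ against $\mu$ gives \eqref{11}.

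The step I expect to demand the most care is the passage $\varepsilon\to0$ inside $\E^\sigma_u$, that is, the $L^2(\P^\sigma_u)$-differentiability of $\varepsilon\mapsto L_\varepsilon$ at $0$ against the factor $\xi_t-u_t\in L^2(\P^\sigma_u)$. The explicit Gaussian structure makes this routine: under $\P^\sigma_u$, $X^u_t$ is centered Gaussian with variance $c_t$, so writing $L_\varepsilon=\exp(\varepsilon X^u_t-\varepsilon^2 c_t/2)$ and using $|e^a-1|\leq|a|e^{|a|}$ one obtains, for $|\varepsilon|\leq1$, the $\varepsilon$-uniform bound $|(L_\varepsilon-1)/\varepsilon|\leq(|X^u_t|+c_t/2)\,e^{|X^u_t|+c_t/2}$, which lies in $L^2(\P^\sigma_u)$; since moreover $(L_\varepsilon-1)/\varepsilon\to X^u_t$ pointwise, dominated convergence applies, and all the expectations above are finite by the reductions of the first paragraph together with $L_\varepsilon\in L^2(\P^\sigma_u)$. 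The remaining points are routine bookkeeping: $u+\varepsilon v$ meets the standing integrability requirement because $(\dot u+\varepsilon\dot v)^2\leq2\dot u^2+2\varepsilon^2\dot v^2$ and $\dot v/\sigma=\sigma\mathbf 1_{[0,t]}\in L^2([0,T],dt)$, and the exchange of $\E^\sigma_u$ with $\int_0^T(\cdot)\,\mu(dt)$ is justified by Tonelli since the integrand is nonnegative.
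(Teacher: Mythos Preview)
Your proof is correct and follows essentially the same Girsanov-differentiation/Cauchy--Schwarz route as the paper: perturb the drift, differentiate the unbiasedness identity to obtain $\E^\sigma_u[(\xi_t-u_t)X^u_t]=\int_0^t\sigma^2_s\,ds$, apply Cauchy--Schwarz, and integrate in $\mu$. The only notable difference is that the paper perturbs by a generic $\zeta\in H$ with $\dot\zeta=\sigma^2$ on all of $[0,T]$ and then invokes the adaptedness of $\xi_t$ (and of $u_t$) to truncate the score $\int_0^T(\dot\zeta_s/\sigma_s^2)\,dX^u_s$ to $\int_0^t$, whereas your choice $\dot v=\sigma^2\mathbf 1_{[0,t]}$ builds the truncation in from the start, so your argument never actually uses the adaptedness hypothesis on $\xi$.
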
 
\begin{Proof} 
 Since $\xi$ is unbiased, for all $\zeta\in H$ we have 
\begin{eqnarray*} 
 \E^\sigma_{u+\varepsilon \zeta}[\xi_t] 
 & = & 
 \E^\sigma_{u+\varepsilon \zeta}[u_t 
 + 
 \varepsilon 
 \zeta_t ] 
\\ 
 & = & 
 \E^\sigma_{u+\varepsilon \zeta}[u_t] 
 + 
 \varepsilon 
 \E^\sigma_{u+\varepsilon \zeta}[\zeta_t ] 
\\ 
 & = & 
 \E^\sigma_{u+\varepsilon \zeta}[u_t] +\varepsilon \zeta_t,
 \quad t\in [0,T], \quad \varepsilon \in \real, 
\end{eqnarray*} 
 hence 
\begin{eqnarray*}
 \zeta_t
 &=& \frac{d}{d\varepsilon} \E^\sigma_{u+\varepsilon \zeta}[
 \xi_t - u_t
 ]_{|\varepsilon=0}
\\
&=& \frac{d}{d\varepsilon} \E^\sigma 
 [
 ( \xi_t - u_t )
 \Lambda ( u+\varepsilon \zeta ) ]_{| \varepsilon=0}
\\
 &=& 
 \E^\sigma 
 \left[
 (
 \xi_t - u_t
 ) 
 \frac{d}{d\varepsilon}
 \Lambda ( u+\varepsilon \zeta )_{| \varepsilon=0} 
 \right]
\\
&=&
 \E^\sigma_u \left[
 ( \xi_t - u_t )
 \frac{d}{d\varepsilon}
 \log \Lambda ( u+\varepsilon \zeta )_{| \varepsilon=0}
\right]
\\
 &=& 
 \E^\sigma_u \left[
 (
 \xi_t - u_t
 )
 \left( 
 \int_0^T \frac{\dot{\zeta}_s}{\sigma^2_s } 
 dX_s
 - 
 \int_0^T \frac{\dot{\zeta}_s\dot{u}_s}{\sigma^2_s} 
 ds 
 \right) 
 \right]
\\
 &=&
 \E^\sigma_u \left[
 (
 \xi_t - u_t
 )
 \int_0^T 
 \frac{\dot{\zeta}_s}{\sigma^2_s} 
 dX^u_s
 \right]
\\
 &=&
 \E^\sigma_u \left[
 (
 \xi_t - u_t
 )
 \int_0^t 
 \frac{\dot{\zeta}_s}{\sigma^2_s} 
 dX^u_s
 \right]
, 
\end{eqnarray*}
 where the exchange between expectation and derivative is justified by 
 classical uniform integrability arguments. 
 Thus, by the Cauchy-Schwarz inequality and the It\^o isometry we have 
$$ 
 \zeta_t^2
 \leq  
 \E^\sigma_u \left[ \left( 
 \int_0^t 
 \frac{\dot{\zeta}_s}{\sigma^2_s} 
 dX^u_s \right)^2 \right] \E^\sigma_u [ | \xi_t - u_t|^2 ]
 = 
 \int_0^t 
 \frac{\dot{\zeta}_s^2}{\sigma^2_s} 
  ds \E^\sigma_u [ | \xi_t - u_t|^2 ], 
 \qquad 
 t\in [0,T]. 
$$ 
 It then suffices to take 
$$\zeta_t = \int_0^t 
 \sigma^2_s ds, 
 \qquad 
 t\in [0,T], 
$$ 
 to get
\begin{equation} 
\label{eqv}
 \var^\sigma_u [\xi_t]
 =
 \E^\sigma_u [ |\xi_t - u_t|^2 ] \geq 
 \int_0^t \sigma^2_s ds, \qquad t\in [0,T] 
,
\end{equation}
 which leads to \eqref{11} after integration with respect to 
 $\mu (dt)$. 
 As noted above, $\hat{u} = (X_t)_{t\in [0,T]}$ 
 is clearly unbiased under $\P^\sigma_u$ and it attains 
 the lower bound $\crb (\sigma,\mu, \hat{u} ) $. 
\end{Proof} 
\noindent 
 Recall that the classical linear 
 parametric estimation problem for the drift of a diffusion 
 consists in estimating the coefficient $\theta$ appearing 
 in 
$$d\xi_t = \theta a_t(\xi_t) dt + dY_t, \qquad 
 \xi_0 = 0 
, 
$$ 
 with a maximum likelihood estimator $\hat{\theta}_T$ given by 
\begin{equation} 
\label{mlet} 
\hat{\theta}_T = \frac{\int_0^T a_t (\xi_t)d\xi_t}{\int_0^T a_t^2(\xi_t)dt} 
, 
\end{equation} 
 cf. \cite{liptser}, \cite{prakasarao} for Brownian motion and 
 \cite{tudorviens} for an extension to fractional Brownian motions. 
\\ 

\noindent 
 Here we consider the nonparametric functional estimation of the drift of 
 a one-dimensional drifted Brownian motion 
 $(X_t)_{t\in \real_+}$ with decomposition 
\begin{equation} 
\label{jk} 
 dX_t = \dot{u}_t dt + dX^u_t, 
\end{equation} 
 where $(\dot{u}_t)_{t\in [0,T]} \in L^2 (\Omega \times [0,T])$ is 
 an adapted process and 
 $(X^u_t)_{t\in \real_+}$ is a standard Brownian motion with 
 quadratic variation $\sigma^2_t$ 
 under a probability $\P^\sigma_u$. 
\\ 

\noindent 
 In case $u$ is constrained to have the form $u_t = \theta t$, $t\in [0,T]$, 
 $\theta \in \real$, our efficient estimator 
 $\hat{u}$ satisfies $\hat{u}_T = \hat{\theta}_T T$, 
 where $\hat{\theta}_T$ is given by \eqref{mlet}, $T>0$, with 
 the asymptotics $\hat{\theta}_T\to \theta$ in probability 
 as $T$ tends to infinity. 
 The asymptotics is not in large time since $T$ can be a fixed parameter, 
 but the efficient estimator 
 $\hat{u} = X$ converges to $u$ as $\sigma$ tends to $0$, 
 or equivalently as $T$ tends to $0$ by rescaling. 
\\ 

\noindent 
 To close this section we note that, at least informally, 
 $\hat{u} = (X_t)_{t\in [0,T]}$ can be viewed as a 
 maximum likelihood estimator of its own adapted drift $(u_t)_{t\in [0,T]}$ 
 under $\P_u^\sigma$. 
 Indeed the functional differentiation of the Cameron-Martin 
 density 
$$\frac{d}{d\varepsilon} \Lambda ( \hat{u}+\varepsilon \zeta )_{\mid \varepsilon = 0} = 0
, \qquad \zeta \in H ,
$$ 
 implies 
$$\int_0^T \frac{\dot{\zeta}_s}{\sigma^2_s } d X_s  
 - 
 \int_0^T \frac{\dot{\zeta}_s}{\sigma^2_s } d\hat{u}_s 
 = 0 
, \qquad 
 \zeta \in H , 
$$ 
 which leads to $X = \hat{u}$. 
\section{Bayes estimators} 
\label{2.0} 
\noindent 
 In this section we 
 consider Bayes estimators which will be useful in proving 
 the minimaxity of the estimator $\hat{u} = X$ in the framework of $(A)$ for Gaussian 
 processes with non-necessarily independent increments. 
 We will make use of the next lemma which is classical in the framework of 
 Gaussian filtering and is proved in the Appendix. 
\begin{lemma} 
\label{ljk} 
 Let $Z$ be a Gaussian process with covariance operator $\Gamma_\tau$ 
 and drift $v\in H$, 
 and assume that $X$ is a Gaussian process with drift $Z$ 
 and covariance operator $\Gamma$ given $Z$. 
 Then, conditionally to $X$, $Z$ has drift 
$$ 
 f 
 \mapsto 
 \langle 
 f 
 , 
 ( \Gamma+\Gamma_\tau)^{-1}\Gamma 
 v 
 \rangle  
 + 
 X ( 
 ( \Gamma+\Gamma_\tau)^{-1}\Gamma_\tau 
 f 
 ) 
 \quad 
 \mbox{and covariance} 
 \quad 
 \Gamma_\tau ( \Gamma + \Gamma_\tau )^{-1} \Gamma 
. 
$$ 
\end{lemma}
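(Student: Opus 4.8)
The plan is to derive the statement from the elementary Gaussian regression (conditioning) formula, transported to the Hilbert space setting.

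\textbf{Step 1: the joint law.} Write $X(f) = Z(f) + N(f)$, where, by hypothesis, $N$ is a centered Gaussian family with covariance $\langle\,\cdot\,,\Gamma\,\cdot\,\rangle$ that is independent of $Z$. Then $(Z(f_1),\ldots,Z(f_n),X(g_1),\ldots,X(g_m))$ is Gaussian for every finite collection $f_i,g_j$, so $(Z,X)$ is jointly Gaussian. From the decomposition I read off the first and second moment functionals: $\E[Z(f)] = \E[X(f)] = \langle f,v\rangle$, $\Cov(Z(f),Z(g)) = \langle f,\Gamma_\tau g\rangle$, $\Cov(Z(f),X(g)) = \Cov(Z(f),Z(g)) = \langle f,\Gamma_\tau g\rangle$ (because $N$ is independent of $Z$), and $\Cov(X(f),X(g)) = \langle f,(\Gamma+\Gamma_\tau)g\rangle$.

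\textbf{Step 2: conditioning.} For a jointly Gaussian family, the conditional law of $Z$ given $\sigma(X)$ is again Gaussian; the conditional mean of $Z(f)$ is the $L^2(\Omega)$-orthogonal projection of $Z(f)-\langle f,v\rangle$ onto the closed linear span of $\{X(g)-\langle g,v\rangle\}$, shifted back by $\langle f,v\rangle$, and the conditional covariance is the (deterministic) residual covariance. Writing this projection as $X(h_f)-\langle h_f,v\rangle$ and imposing the normal equations $\E\bigl[(Z(f)-\langle f,v\rangle-X(h_f)+\langle h_f,v\rangle)(X(g)-\langle g,v\rangle)\bigr]=0$ for all $g$ gives $\Gamma_\tau f=(\Gamma+\Gamma_\tau)h_f$, hence $h_f=(\Gamma+\Gamma_\tau)^{-1}\Gamma_\tau f$. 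Substituting back and using the self-adjointness of $\Gamma$, $\Gamma_\tau$, $(\Gamma+\Gamma_\tau)^{-1}$ together with $\Gamma=(\Gamma+\Gamma_\tau)-\Gamma_\tau$, the conditional drift functional simplifies to $f\mapsto\langle f,(\Gamma+\Gamma_\tau)^{-1}\Gamma v\rangle+X((\Gamma+\Gamma_\tau)^{-1}\Gamma_\tau f)$, and the residual covariance to $\langle f,(\Gamma_\tau-\Gamma_\tau(\Gamma+\Gamma_\tau)^{-1}\Gamma_\tau)g\rangle=\langle f,\Gamma_\tau(\Gamma+\Gamma_\tau)^{-1}\Gamma g\rangle$, i.e.\ the announced covariance operator. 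Equivalently one may write out the joint law through the Cameron--Martin densities of $Z$ and of $X$ given $Z$ and complete the square, which yields the same formulas.

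\textbf{Obstacles.} Two points need a little care, neither analytically serious. First, the infinite-dimensional justification that conditioning preserves Gaussianity and that the conditional mean coincides with the $L^2$-projection: I would obtain it by first conditioning on finitely many coordinates $X(g_1),\ldots,X(g_m)$, applying the classical finite-dimensional Gaussian conditioning formula, and passing to the limit along a total family of $g_j$'s, or else by invoking the standard disintegration theory of Gaussian measures. Second, matching the simplified expressions with the exact operator ordering displayed in the statement: this rests only on $\Gamma=(\Gamma+\Gamma_\tau)-\Gamma_\tau$, on the self-adjointness of the covariance operators (which makes $\Gamma_\tau(\Gamma+\Gamma_\tau)^{-1}\Gamma$ self-adjoint, hence equal to $\Gamma(\Gamma+\Gamma_\tau)^{-1}\Gamma_\tau$), and, for the ordering in the drift term, on the commutation of $\Gamma$ and $\Gamma_\tau$ that holds in the intended applications. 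I expect this bookkeeping, rather than any genuine difficulty, to be the only place requiring attention.
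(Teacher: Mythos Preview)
Your argument is correct and arrives at the stated formulas, but the paper takes a different route. Instead of invoking the Gaussian regression formula and solving the normal equations for the projection $X(h_f)$, the paper works with characteristic functionals: it computes $\E[\exp(iX(g))\exp(iZ(f))]$ by first conditioning on $Z$, then rewrites the resulting Gaussian exponential by completing the square in $g$ so that it takes the form $\E[\exp(iX(g))\,\Phi(X,f)]$ for an explicit $\Phi$. Since this holds for all $g$, one reads off $\E[\exp(iZ(f))\mid X]=\Phi(X,f)$ and extracts the conditional drift and covariance from its Gaussian form.

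The characteristic-function approach has the advantage that every step is a finite-dimensional computation, so it sidesteps your first obstacle (justifying that the conditional mean equals the $L^2$-projection onto the closed Gaussian span of $X$ in infinite dimensions) without any limiting argument. Your projection argument is more structural and makes the operator algebra more transparent: the normal equations deliver $h_f=(\Gamma+\Gamma_\tau)^{-1}\Gamma_\tau f$ in one line, whereas the paper has to track quadratic forms through an exponential. Both approaches land on the same bookkeeping issue you flag at the end---matching the operator ordering in the drift term with the statement as displayed requires the self-adjointness identities you list and, for one of the orderings, commutation of $\Gamma$ and $\Gamma_\tau$; the paper's proof is no more explicit on this point than yours.
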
 

\noindent 
 Note that unlike in Proposition~\ref{prop:CramerRaoBound}, 
 no adaptedness or unbiasedness restriction is made on 
 $\xi$ in the infimum taken in \eqref{infr} below. 
\begin{prop} 
\label{best} 
 {Bayes estimator.} 
 Let $P^\tau_v$ denote the Gaussian distribution on $\Omega$ 
 with covariance operator $\Gamma_\tau$ and drift $v\in H$. 
 The Bayes risk 
\begin{equation} 
\label{br} 
 \int_\Omega 
 \E_z \left[ \int_0^T | \xi_t - z_t|^2  \mu ( dt ) 
 \right] 
 d\P^\tau_v (z) 
\end{equation} 
 of any estimator $(\xi_t)_{t\in [0,T]}$ 
 on $\Omega$ under the prior distribution $\P^{\tau}_v$ 
 is uniquely minimized by 
$$ 
 \xi^{\tau,v}_t 
 : = 
 \langle 
 \stackrel{}{\chi}_t , 
 ( \Gamma_\tau+\Gamma )^{-1} \Gamma 
 v \rangle 
 + 
 X ( 
 ( \Gamma_\tau+\Gamma )^{-1} \Gamma_\tau 
 \stackrel{}{\chi}_t  
 ) 
, 
 \qquad
 t\in [0,T] 
, 
$$ 
 which has risk 
\begin{equation} 
\label{infr} 
 \int_0^T 
 \langle 
 \stackrel{}{\chi}_t  
 , 
 \Gamma 
 ( \Gamma_\tau+\Gamma )^{-1} \Gamma_\tau 
 \stackrel{}{\chi}_t  
 \rangle 
 \mu ( dt ) 
 = 
 \inf_\xi 
 \int_\Omega 
 \E_z \left[ \int_0^T | \xi_t - z_t|^2 
 \mu ( dt ) 
 \right] 
 d\P^\tau_v (z) 
. 
\end{equation} 
\end{prop}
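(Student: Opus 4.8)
The plan is to reduce the statement to the classical principle that, under quadratic loss, the Bayes estimator is the posterior mean, and then to read off that posterior mean and the associated posterior variance directly from Lemma~\ref{ljk}. First I would rewrite the Bayes risk \eqref{br} by Fubini's theorem as a single expectation
$$
\int_\Omega \E_z\left[\int_0^T |\xi_t - z_t|^2\,\mu(dt)\right] d\P^\tau_v(z)
= \int_0^T \E\big[|\xi_t - z_t|^2\big]\,\mu(dt),
$$
where on the right-hand side the pair $(X,z)$ carries the joint law obtained by drawing $z$ from the prior $\P^\tau_v$ and then, conditionally on $z$, drawing $X$ from the Gaussian law with drift $z$ and covariance operator $\Gamma$, and $\E$ is expectation under this joint law. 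Since $\mathcal{F}$ is generated by $X$, every estimator $(\xi_t)_{t\in[0,T]}$ is a functional of $X$, so for $\mu$-a.e.\ $t$ the inner expectation $\E[|\xi_t - z_t|^2]$ is minimized over all such functionals precisely when $\xi_t = \E[z_t \mid X]$, by the orthogonal-projection characterization of conditional expectation in $L^2$; the minimizer is moreover unique in $L^2(\Omega\times[0,T],\P^\tau_v\otimes\mu)$. Integrating in $t$ then identifies $\xi^{\tau,v}_t := \E[z_t\mid X]$ as the unique minimizer of \eqref{br}.

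Next I would compute this conditional expectation using Lemma~\ref{ljk}. In the notation of that lemma, $Z=z$ is a Gaussian process with covariance operator $\Gamma_\tau$ and drift $v\in H$ — the prior — and, conditionally on $Z$, the observation $X$ is Gaussian with drift $Z$ and covariance operator $\Gamma$, which is exactly the present setting. Lemma~\ref{ljk} then gives that, conditionally on $X$, the process $Z$ is Gaussian with drift functional $f\mapsto \langle f,(\Gamma+\Gamma_\tau)^{-1}\Gamma v\rangle + X((\Gamma+\Gamma_\tau)^{-1}\Gamma_\tau f)$ and covariance operator $\Gamma_\tau(\Gamma+\Gamma_\tau)^{-1}\Gamma$. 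Writing $z_t = z(\chi_t)$ with $\chi_t$ in the reference space (in framework $(A)$, $\dot\chi_t = 1_{[0,t]}\in L^2([0,T],dt)$ and $H$ is the completion of $\mathrm{span}\{\chi_t:t\in[0,T]\}$), I evaluate the drift functional at $f=\chi_t$ and obtain
$$
\E[z_t\mid X] = \langle \chi_t,(\Gamma_\tau+\Gamma)^{-1}\Gamma v\rangle + X\big((\Gamma_\tau+\Gamma)^{-1}\Gamma_\tau \chi_t\big),
$$
which is precisely $\xi^{\tau,v}_t$.

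Finally I would evaluate the Bayes risk of $\xi^{\tau,v}$. By the bias–variance decomposition of conditional expectation, $\E[|z_t - \E[z_t\mid X]|^2] = \E[\mathrm{Var}(z_t\mid X)]$, and since by Lemma~\ref{ljk} the conditional covariance operator of $z$ given $X$ is the deterministic operator $\Gamma_\tau(\Gamma+\Gamma_\tau)^{-1}\Gamma$, we get $\mathrm{Var}(z_t\mid X) = \langle \chi_t, \Gamma_\tau(\Gamma+\Gamma_\tau)^{-1}\Gamma\,\chi_t\rangle$. The elementary operator identity $A(A+B)^{-1}B = A - A(A+B)^{-1}A = B(A+B)^{-1}A$ (valid whenever the inverse is defined) rewrites this as $\langle \chi_t, \Gamma(\Gamma_\tau+\Gamma)^{-1}\Gamma_\tau\,\chi_t\rangle$, and integrating against $\mu(dt)$ yields the value claimed in \eqref{infr}.

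The substantive content of the argument is Lemma~\ref{ljk} itself — the infinite-dimensional Gaussian conditioning/filtering formula, whose proof is deferred to the Appendix. At the level of this proof the only points requiring care are the Fubini interchange and the integrability ensuring that $\xi^{\tau,v}\in L^2(\Omega\times[0,T],\P^\tau_v\otimes\mu)$ and that the drift and covariance functionals of Lemma~\ref{ljk} may legitimately be evaluated at $\chi_t$; both are routine under the standing assumptions. I thus expect the main obstacle to be bookkeeping rather than ideas: confirming that $(\Gamma+\Gamma_\tau)^{-1}\Gamma$ and $(\Gamma+\Gamma_\tau)^{-1}\Gamma_\tau$ act on the relevant domain and that $\chi_t$ belongs to it, which is immediate in framework $(A)$.
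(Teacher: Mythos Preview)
Your proposal is correct and follows essentially the same route as the paper: both invoke Lemma~\ref{ljk} to obtain the posterior law of $Z$ given $X$, identify the Bayes estimator with the posterior mean $\E[Z_t\mid X]$ via the $L^2$ bias--variance decomposition, and read off the minimal risk as the integrated posterior variance. The only cosmetic differences are that the paper conditions on $X$ inside the expectation before decomposing (rather than applying Fubini first), and it silently uses the symmetry $\Gamma_\tau(\Gamma+\Gamma_\tau)^{-1}\Gamma = \Gamma(\Gamma+\Gamma_\tau)^{-1}\Gamma_\tau$ that you make explicit.
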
 
\begin{Proof} 
 Let $Z$ denote a Gaussian process 
 with drift $v\in H$ and covariance $\Gamma_\tau$. 
 Recall (cf. Lemma~\ref{ljk}) that 
 if $X$ has drift $Z$ and covariance $\Gamma$ 
 then, conditionally to $X$, $(Z_t)_{t\in [0,T]}$ has drift 
$$ 
 t \mapsto 
 \langle 
 \stackrel{}{\chi}_t  
 , 
 (\Gamma_\tau + \Gamma )^{-1} 
 \Gamma 
 v 
 \rangle 
 + 
 X ( 
 (\Gamma_\tau + \Gamma )^{-1} 
 \Gamma_\tau 
 \stackrel{}{\chi}_t  
 ) 
$$ 
 and covariance 
 $
 \Gamma_\tau 
 (\Gamma_\tau + \Gamma )^{-1} 
 \Gamma$. 
 Hence the Bayes risk of an estimator $\xi$ 
 under the prior distribution $\P^\tau_v$ is given by 
\begin{eqnarray*} 
\lefteqn{ 
 \int_\Omega 
 \E_z \left[ \int_0^T | \xi_t - z_t|^2 \mu ( dt ) 
 \right] 
 d\P^\tau_v (z) 
 = 
 \E \left[ 
 \E \left[ \int_0^T | \xi_t - Z_t|^2 \mu ( dt ) \Big| X 
 \right] 
 \right] 
} 
\\ 
 & = & 
 \E \left[ 
 \int_0^T | \xi_t - \E[Z_t \mid X ] |^2 \mu ( dt ) 
 \right] 
 + 
 \E \left[ 
 \int_0^T 
 \var ( Z_t | X ) 
 \mu ( dt ) 
 \right] 
\\ 
 & = & 
 \E \left[ 
 \int_0^T 
 \left| \xi_t - 
 \langle 
 \stackrel{}{\chi}_t , 
 ( \Gamma_\tau+\Gamma )^{-1} \Gamma 
 v \rangle 
 - 
 X ( 
 ( \Gamma_\tau+\Gamma )^{-1} \Gamma_\tau 
 \stackrel{}{\chi}_t 
 ) 
 \right|^2 
 \mu ( dt ) 
 \right] 
\\ 
 & & 
 + 
 \int_0^T 
 \langle 
 \stackrel{}{\chi}_t 
 , 
 \Gamma 
 ( \Gamma_\tau+\Gamma )^{-1} \Gamma_\tau 
 \stackrel{}{\chi}_t 
 \rangle 
 \mu ( dt ) 
, 
\end{eqnarray*} 
 which is minimized by 
$$ 
 \xi^{\tau,v}_t 
 : =  
 \E[Z_t \mid X ] 
 = 
 \langle 
 \stackrel{}{\chi}_t , 
 ( \Gamma_\tau+\Gamma )^{-1} \Gamma 
 v \rangle 
 - 
 X ( 
 ( \Gamma_\tau+\Gamma )^{-1} \Gamma_\tau 
 \stackrel{}{\chi}_t  
 ) 
, 
 \qquad t\in [0,T] 
. 
$$ 
\end{Proof} 
\noindent 
 Clearly $\xi^{\tau,v}$ is unique in the sense that it is the only 
 estimator to minimize the Bayes risk \eqref{br}. 
 This shows in particular that every $\xi^{\tau,v}$ is admissible 
 in the sense that if an estimator $\xi$ satisfies 
$$ 
 \E_z \left[
 \Vert \xi - z \Vert_{L^2 ([0,T] , d\mu )}^2
 \right] 
 \leq 
 \E_z \left[
 \Vert \xi^{\tau,v} - z \Vert_{L^2 ([0,T] , d\mu )}^2
 \right], \qquad 
 z\in \Omega 
, 
$$ 
 then 
\begin{eqnarray*} 
 \int_\Omega 
 \E_z \left[
 \Vert \xi - z \Vert_{L^2 ([0,T] , d\mu )}^2
 \right] 
 d\P^\tau_v 
 & \leq & 
 \int_\Omega 
 \E_z \left[
 \Vert \xi^{\tau,v} - z \Vert_{L^2 ([0,T] , d\mu )}^2
 \right] 
 d\P^\tau_v 
\\ 
 & = & 
 \int_0^T 
 \langle 
 \stackrel{}{\chi}_t  
 , 
 \Gamma 
 ( \Gamma_\tau+\Gamma )^{-1} \Gamma_\tau 
 \stackrel{}{\chi}_t  
 \rangle 
 \mu ( dt ) 
, 
\end{eqnarray*} 
 hence 
\begin{equation} 
\label{mrsk} 
 \int_\Omega 
 \E_z \left[
 \Vert \xi - z \Vert_{L^2 ([0,T] , d\mu )}^2
 \right] 
 d\P^\tau_v 
 = 
 \int_0^T 
 \langle 
 \stackrel{}{\chi}_t  
 , 
 \Gamma 
 ( \Gamma_\tau+\Gamma )^{-1} \Gamma_\tau 
 \stackrel{}{\chi}_t  
 \rangle 
 \mu ( dt ) 
, 
\end{equation} 
 and $\xi = \xi^{\tau,v}$ by Proposition~\ref{best}. 
\\ 

\noindent 
 The Bayes estimator $\xi^{\tau , v}$ is biased in general, 
 and for deterministic $u\in H$ its mean square error under 
 $\P_u$ is equal to 
\begin{eqnarray} 
\label{abid} 
\lefteqn{ 
 \E_u \left[ \int_0^T | \xi^{\tau , v}_t - u_t|^2  \mu ( dt ) 
 \right] 
} 
\\ 
\nonumber 
 & = & 
 \E_u \left[ \int_0^T \Big| 
 \xi^{\tau , v}_t - \E_u [ \xi^{\tau , v}_t ] 
 \Big|^2  \mu ( dt ) 
 \right] 
 + 
 \E_u \left[ 
 \int_0^T | 
 \E_u [ \xi^{\tau , v}_t ] 
 - u_t 
 |^2 
 \mu ( dt ) 
 \right] 
\\ 
\nonumber 
 & = & 
 \E_u \left[ \int_0^T \Big| 
 X^u ( 
 ( \Gamma_\tau+\Gamma )^{-1} \Gamma_\tau 
 \stackrel{}{\chi}_t  
 ) 
 \Big|^2  \mu ( dt ) 
 \right] 
 + 
 \int_0^T \Big| 
 \langle 
 \stackrel{}{\chi}_t , 
 ( \Gamma_\tau+\Gamma )^{-1} \Gamma 
 ( v - u ) 
 \rangle 
 \Big|^2 
 \mu ( dt ) 
\\ 
\nonumber 
 & = & 
 \int_0^T 
 \langle 
 ( \Gamma_\tau+\Gamma )^{-1} 
 \Gamma_\tau 
 \stackrel{}{\chi}_t 
 , 
 \Gamma ( \Gamma_\tau+\Gamma )^{-1} 
 \Gamma_\tau 
 \stackrel{}{\chi}_t 
 \rangle 
 \mu ( dt ) 
\\ 
\nonumber 
& &
 + 
 \int_0^T \Big| 
 \langle 
 \stackrel{}{\chi}_t , 
 ( \Gamma_\tau+\Gamma )^{-1} \Gamma 
 ( v - u ) 
 \rangle 
 \Big|^2 
 \mu ( dt ) 
, 
\end{eqnarray} 
 which shows that 
$$ 
 \sup_{u\in H} 
 \E_u \left[ \int_0^T | \xi^{\tau , v}_t - u_t|^2 \mu ( dt ) 
 \right] 
 = 
 + \infty 
, 
$$ 
 hence $\xi^{\tau , v}$ is not minimax. 
\\ 
 
\noindent In the independent increment case of Section~\ref{2} we have, 
 if 
 $\Gamma_\tau f(s) = \tau_s^2 f(s)$, $s\in [0,T]$: 
$$ 
 \xi^{\tau,v}_t 
 : = 
 \int_0^t \frac{\sigma^2_s\dot{v}_s}{\tau^2_s+\sigma^2_s} ds 
 + 
 \int_0^t \frac{\tau^2_s}{\tau^2_s+\sigma^2_s} dX_s 
, 
 \qquad
 t\in [0,T] 
, 
$$ 
 with risk 
\begin{equation} 
\label{infr2} 
 \int_0^T 
 \int_0^t \frac{\tau^2_s\sigma^2_s}{\tau^2_s+\sigma^2_s} ds 
 \mu ( dt ) 
 = 
 \inf_\xi 
 \int_\Omega 
 \E_z \left[ \int_0^T | \xi_t - z_t|^2 
 \mu ( dt ) 
 \right] 
 d\P^\tau_v (z) 
. 
\end{equation} 
\noindent 
 Assuming now that $\Gamma_\tau f (t) = \tau^2 f(t)$, $t\in [0,T]$, 
 the Bayes risk 
$$ 
 \int_0^T 
 \langle 
 \stackrel{}{\chi}_t , 
 \Gamma_\tau  
 ( \Gamma_\tau+\Gamma )^{-1} 
 \Gamma 
 \stackrel{}{\chi}_t 
 \rangle 
 \mu ( dt ) 
 = 
 \int_0^T 
 \langle 
 \stackrel{}{\chi}_t , 
 ( I + \Gamma /\tau^2 )^{-1} 
 \Gamma \stackrel{}{\chi}_t 
 \rangle 
 \mu ( dt ) 
, 
$$ 
 of $\xi^{\tau , v}$, $\tau \in \real$, 
 converges as $\tau \to \infty$ to the bound 
$$ 
 \crb (\sigma,\mu, \hat{u} ) 
 = 
 \int_0^T 
 \langle 
 \stackrel{}{\chi}_t , 
 \Gamma 
 \stackrel{}{\chi}_t 
 \rangle 
 \mu ( dt ) 
,
$$ 
 hence it follows in the next proposition that, as in 
 the finite dimensional Gaussian case, the 
 estimator $\hat{u} = (X_t)_{t\in [0,T]}$ is minimax. 
 Note again that unlike in Proposition~\ref{prop:CramerRaoBound}, 
 no adaptedness condition is imposed on $\xi$ in the infima 
 \eqref{infr} and \eqref{nmx}. 
\begin{prop} 
\label{minimax} 
 The estimator $\hat{u}=X$ is minimax. 
 For all $u\in \Omega$ we have 
\begin{equation} 
\label{nmx} 
 \crb ( \gamma ,\mu, \hat{u} ) 
 = 
 \E_u \left[ \int_0^T | X_t - u_t|^2 \mu ( dt ) \right] 
 = \inf_\xi 
 \sup_{v \in \Omega } 
 \E_v \left[ \int_0^T | \xi_t - v_t|^2 \mu ( dt ) \right] 
. 
\end{equation} 
\end{prop}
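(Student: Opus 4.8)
The plan is to use the standard minimax--Bayes duality: a minimax lower bound is obtained by exhibiting a sequence of priors whose Bayes risks converge to the value we claim, and the matching upper bound is just the (constant, $v$-independent) risk of $\hat u = X$ computed in Proposition~\ref{prop:CramerRaoBound}. Concretely, for any estimator $\xi$ and any $\tau\in\real$ we have
$$
 \sup_{v\in\Omega}\E_v\!\left[\int_0^T|\xi_t-v_t|^2\,\mu(dt)\right]
 \;\geq\;
 \int_\Omega \E_z\!\left[\int_0^T|\xi_t-z_t|^2\,\mu(dt)\right]d\P^\tau_0(z)
 \;\geq\;
 \int_0^T\!\langle\stackrel{}{\chi}_t,(I+\Gamma/\tau^2)^{-1}\Gamma\stackrel{}{\chi}_t\rangle\,\mu(dt),
$$
where the first inequality bounds a supremum below by an average over the prior $\P^\tau_0$ (the mean-zero version of $\P^\tau_v$, taking $\Gamma_\tau f=\tau^2 f$), and the second is precisely the lower bound from Proposition~\ref{best}, i.e. the Bayes risk of the optimal Bayes estimator $\xi^{\tau,0}$. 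Taking the infimum over $\xi$ and then letting $\tau\to\infty$, the right-hand side increases to $\int_0^T\langle\stackrel{}{\chi}_t,\Gamma\stackrel{}{\chi}_t\rangle\,\mu(dt)=\crb(\gamma,\mu,\hat u)$, as already observed just before the statement. This gives $\inf_\xi\sup_v\E_v[\,\cdots\,]\geq\crb(\gamma,\mu,\hat u)$.

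For the reverse inequality, I would simply take $\xi=\hat u=X$: by the computation in Proposition~\ref{prop:CramerRaoBound} (and the remark following it), $\E_v[\int_0^T|X_t-v_t|^2\mu(dt)]=\crb(\gamma,\mu,\hat u)$ for every $v\in\Omega$, so the supremum over $v$ equals $\crb(\gamma,\mu,\hat u)$ and hence $\inf_\xi\sup_v\E_v[\,\cdots\,]\leq\crb(\gamma,\mu,\hat u)$. Combining the two bounds yields \eqref{nmx}, and along the way we have identified $\hat u=X$ as a minimax estimator.

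The step I expect to require the most care is the passage to the limit $\tau\to\infty$: one must check that $\langle\stackrel{}{\chi}_t,(I+\Gamma/\tau^2)^{-1}\Gamma\stackrel{}{\chi}_t\rangle$ increases monotonically to $\langle\stackrel{}{\chi}_t,\Gamma\stackrel{}{\chi}_t\rangle$ for $\mu$-a.e.\ $t$ and then invoke monotone convergence to pass the limit under $\int_0^T(\cdot)\,\mu(dt)$; since $(I+\Gamma/\tau^2)^{-1}$ is a bounded positive self-adjoint contraction increasing to the identity in the strong sense as $\tau\to\infty$, and $\Gamma$ is positive, this is routine but should be stated. A secondary point worth a line is the legitimacy of bounding $\sup_v$ below by $\int_\Omega(\cdot)\,d\P^\tau_0$, which is immediate once one notes $\P^\tau_0$ is a genuine probability measure on $\Omega$ supported on (a completion of) paths; no measurability subtleties arise because $\xi$ is arbitrary and the inequality is purely pointwise-in-$\xi$. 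Everything else is a direct appeal to Propositions~\ref{prop:CramerRaoBound} and \ref{best}.
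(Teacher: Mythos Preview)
Your proposal is correct and follows essentially the same route as the paper: the upper bound comes from the constant risk of $\hat u=X$, and the lower bound from bounding $\sup_v$ below by the Bayes risk under the prior $\P^\tau_0$ (Proposition~\ref{best}) and letting $\tau\to\infty$. Your additional remarks on monotone convergence for the limit $\tau\to\infty$ actually go beyond the paper, which simply asserts the convergence without justification.
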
 
\begin{Proof} 
 Clearly, taking $\xi=0$ yields 
$$ 
 \crb (\gamma ,\mu, \hat{u} ) 
 = 
 \sup_{u\in \Omega} 
 \E_u \left[ \int_0^T | X_t - u_t|^2 \mu ( dt ) \right] 
 \geq \inf_\xi 
 \sup_{u\in \Omega} 
 \E_u \left[ \int_0^T | \xi_t - u_t|^2 \mu ( dt ) \right] 
. 
$$ 
 On the other hand, from Proposition~\ref{best}, for all processes 
 $\xi$ we have 
\begin{eqnarray*} 
 \sup_{u \in \Omega} 
 \E_u \left[ \int_0^T | \xi_t - u_t|^2 \mu ( dt ) \right] 
 & \geq & 
 \int_\Omega 
 \E_z \left[ \int_0^T | \xi_t - z_t|^2 \mu ( dt ) 
 \right] 
 d\P^\tau_0 (z)  
\\ 
 & \geq & 
 \int_0^T 
 \langle 
 \stackrel{}{\chi}_t , 
 ( I + \Gamma /\tau^2 )^{-1} 
 \Gamma 
 \stackrel{}{\chi}_t 
 \rangle 
 \mu ( dt ) 
, 
\end{eqnarray*} 
 for all $\tau > 0$, hence 
$$ 
 \inf_\xi 
 \sup_{u\in H} 
 \E_u \left[ \int_0^T | \xi_t - u_t|^2 \mu ( dt ) \right] 
 \geq 
 \int_0^T 
 \langle 
 \stackrel{}{\chi}_t , 
 \Gamma 
 \stackrel{}{\chi}_t 
 \rangle 
 \mu ( dt ) 
 = 
 \crb ( \gamma ,\mu, \hat{u} ) 
. 
$$ 
\end{Proof} 
\section{Malliavin calculus on Gaussian space}
\label{2.1} 
 Before proceeding to the construction 
 of Stein type estimators, we need to introduce 
 some elements of analysis on Gaussian space, see e.g. \cite{nualartm2}. 
 This construction is valid in both frameworks $(A)$ and $(B)$. 
 Given $u \in H$, let 
$$ 
 X^u = X - u. 
$$  

\noindent 
 We fix $(h_n)_{n \geq 1}$ a total subset of $H$ 
 and let ${\cal S}$ denote the space of cylindrical functionals of the 
 form 
\begin{equation}
\label{e1}
 F= f_n 
 \left(
 X^u ( h_1 ) 
 ,
 \ldots
 , 
 X^u ( h_n) 
 \right)
,
\end{equation}
 where $f_n$ is in the space of infinitely differentiable rapidly decreasing
 functions on $\real^n$, $n\geq 1$.
\noindent 
\begin{definition} 
 The $H $-valued Malliavin derivative is defined as
$$ 
 \nabla_t F =
 \sum_{i=1}^{n} 
 h_i ( t ) 
 \partial_i f_n 
 \left(
 X^u (h_1 ) 
 ,
 \ldots
 ,
 X^u (h_n) 
 \right)
,
$$
 for $F \in {\cal S}$ of the form \eqref{e1}. 
\end{definition} 
\noindent 
 It is known that $\nabla$ is closable, cf. Proposition~1.2.1 of 
 \cite{nualartm2}, and its closed domain will be denoted by $\Dom (\nabla )$. 
\begin{definition} 
 Let $D$ be defined on $\Dom ( \nabla )$ as 
$$D_t F : = ( \Gamma \nabla F ) (t) 
, \quad t\in [0,T], \quad F\in \Dom (\nabla ) 
. 
$$ 
\end{definition} 
\noindent 
 Let $\delta : L^2_u (\Omega ; H ) \to L^2 (\Omega , \P_u )$ 
 denote the closable adjoint of $\nabla$, i.e. the 
 divergence operator under $\P_u$, 
 which satisfies the integration by parts formula
\begin{equation} 
\label{intparpartiesvar1/N} 
 \E_u [ F \delta ( v ) ] 
 = \E_u [ \langle v , \nabla F \rangle_{H} ] 
, 
 \qquad 
 F \in \Dom ( \nabla ) 
 , 
 \quad 
 v \in \Dom (\delta )
, 
\end{equation} 
 with the relation 
$$\delta (h F ) = F X (h) - \langle h , \nabla F\rangle_H, 
$$ 
 cf. \cite{nualartm2}, for $F\in \Dom (\nabla )$ and $h\in H$ such 
 that $hF\in \Dom (\delta )$. 
 Note that \eqref{intparpartiesvar1/N} is an infinite-dimensional version 
 of the integration by parts \eqref{ibp2}, which can be proved e.g. using 
 the countable Gaussian random variables constructed from $X$. 
\begin{lemma} 
\label{prt} 
 We have 
$$ 
 \E_u [ F X^u_t ] 
 = \E_u [ 
 D_t F 
 ] 
, 
 \qquad 
 t\in [0,T] 
 , 
 \quad 
 F \in \Dom ( \nabla ) 
. 
$$ 
\end{lemma}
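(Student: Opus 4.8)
The plan is to obtain the identity directly from the integration by parts formula \eqref{intparpartiesvar1/N}, by feeding into it the deterministic element of $H$ representing $X^u_t$. Working in framework $(A)$, I would first recall that $X^u_t=X^u(\stackrel{}{\chi}_t)$, where $\stackrel{}{\chi}_t(s)=\min(s,t)$: indeed $\langle\stackrel{}{\chi}_t,u\rangle=\langle 1_{[0,t]},\dot u\rangle_{L^2([0,T],dt)}=u_t$, so that $X^u(\stackrel{}{\chi}_t)=X(\stackrel{}{\chi}_t)-\langle\stackrel{}{\chi}_t,u\rangle=X_t-u_t=X^u_t$. Since $\stackrel{}{\chi}_t$ is a fixed, deterministic element of $H$, it belongs to $\Dom(\delta)$ and its divergence is the corresponding element of the first chaos of $X^u$, namely $\delta(\stackrel{}{\chi}_t)=X^u(\stackrel{}{\chi}_t)=X^u_t$.

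Next I would apply \eqref{intparpartiesvar1/N} with $v=\stackrel{}{\chi}_t$, which yields
\[
 \E_u[F X^u_t]=\E_u\big[F\,\delta(\stackrel{}{\chi}_t)\big]=\E_u\big[\langle\stackrel{}{\chi}_t,\nabla F\rangle_H\big],\qquad F\in\Dom(\nabla).
\]
It then remains only to recognize $\langle\stackrel{}{\chi}_t,\nabla F\rangle_H$ as $D_tF=(\Gamma\nabla F)(t)$. Using $\langle h,g\rangle_H=\langle h,\Gamma g\rangle$ together with the elementary identity $\langle\stackrel{}{\chi}_t,\psi\rangle=\int_0^t\dot\psi_s\,ds=\psi(t)$, valid for $\psi\in H$ because every element of $H$ vanishes at $0$, one obtains, with $\psi=\Gamma\nabla F$,
\[
 \langle\stackrel{}{\chi}_t,\nabla F\rangle_H=\langle\stackrel{}{\chi}_t,\Gamma\nabla F\rangle=(\Gamma\nabla F)(t)=D_tF,
\]
which is the asserted equality. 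The same computation carries over to framework $(B)$ with $\stackrel{}{\chi}_t$ replaced by the element of $H$ representing $X^u_t$ (approximating $X^u_t$ in $L^2(\Omega,\P_u)$ by first-chaos variables $X^u(v_n)$, $v_n\in H$, if no single such element is available).

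I expect the argument to be essentially routine once the right test element has been spotted. The two points deserving a little care are the identification of $\stackrel{}{\chi}_t$ as the element of $H$ whose divergence is $X^u_t$, so that \eqref{intparpartiesvar1/N} may be applied with $v=\stackrel{}{\chi}_t$, and the bookkeeping between the two inner products $\langle\cdot,\cdot\rangle$ and $\langle\cdot,\cdot\rangle_H$: it is precisely the insertion of $\Gamma$ that turns the $H$-linear functional $\langle\stackrel{}{\chi}_t,\cdot\rangle_H$ into the pointwise evaluation operator $D_t$. I do not anticipate a genuine obstacle beyond keeping track of the relevant domains.
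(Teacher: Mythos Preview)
Your argument for framework $(A)$ is correct and essentially identical to the paper's: identify $X^u_t=X^u(\chi_t)=\delta(\chi_t)$, apply the integration by parts formula \eqref{intparpartiesvar1/N} with $v=\chi_t$, and then rewrite $\langle\chi_t,\nabla F\rangle_H=\langle 1_{[0,t]},\dot{\Gamma}\nabla F\rangle_{L^2([0,T],dt)}=(\Gamma\nabla F)(t)=D_tF$.

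For framework $(B)$ the paper proceeds differently and more concretely than your approximation sketch: it uses the Karhunen--Lo\`eve expansion $X^u_t=\sum_k h_k(t)X^u(h_k)$ directly, applies integration by parts termwise to get $\E_u[FX^u(h_k)]=\E_u[\langle h_k,\nabla F\rangle_H]=\E_u[\langle h_k,\Gamma\nabla F\rangle_{L^2([0,T],\mu)}]$, and resums to obtain $\E_u[(\Gamma\nabla F)(t)]$. This avoids having to produce or approximate a single $H$-representative for the evaluation functional, which in case $(B)$ is not as immediate as $\chi_t$ is in case $(A)$; your approximation idea would also work but requires a limiting argument that the explicit basis computation sidesteps.
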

\begin{Proof} 
\begin{description} 
\item{(A)} 
 In the case of Paley-Wiener expansions we have 
\begin{eqnarray*} 
 \E_u [ F X^u_t ] 
 & = & 
 \E_u [ F X^u  ( \stackrel{}{\chi}_t ) ] 
\\ 
 & = & 
 \E_u [ F \delta ( \stackrel{}{\chi}_t ) ] 
\\ 
 & = & 
 \E_u [ \langle \stackrel{}{\chi}_t , \nabla F\rangle_H ] 
\\ 
 & = & 
 \E_u [ \langle 1_{[0,t]} , \dot{\Gamma} \nabla F\rangle_{L^2([0,T] , dt )} ] 
\\ 
 & = & 
 \E_u [ ( \Gamma \nabla F) ( t ) ] 
, 
 \qquad 
 F \in \Dom ( \nabla ) 
 , 
 \quad 
 t\in [0,T] 
. 
\end{eqnarray*} 
\item{(B)} 
 In the case of Karhunen-Lo\`eve expansions we have 
\begin{eqnarray*} 
 \E_u [ F X^u_t ] 
 & = & 
 \sum_{k=0}^\infty 
 h_k(t) 
 \E_u [ F X^u (h_k) ] 
\\ 
 & = & 
 \sum_{k=0}^\infty 
 h_k(t) 
 \E_u [ F \delta (h_k) ] 
\\ 
 & = & 
 \sum_{k=0}^\infty 
 h_k(t) 
 \E_u [ \langle h_k , \nabla F\rangle_H ] 
\\ 
 & = & 
 \sum_{k=0}^\infty 
 h_k(t) 
 \E_u [ \langle h_k , \Gamma \nabla F\rangle_{L^2([0,T],\mu )} ] 
\\ 
 & = & 
 \E_u [ 
 ( \Gamma \nabla F ) (t) 
 ] 
, 
 \qquad 
 F \in \Dom ( \nabla ) 
 , 
 \quad 
 t\in [0,T] 
. 
\end{eqnarray*} 
\end{description}  
\end{Proof} 
\noindent 
\begin{definition} 
 We define the Laplacian $\Delta$ by
$$ 
 \Delta F = \trace_{L^2([0,T] , d\mu )^{\otimes 2}} D D F 
 = \int_0^T D_t D_t F \mu (dt) 
$$ 
 on the space $\Dom (\Delta )$ made 
 of all $F\in \Dom (\nabla )$ 
 such that $D_tF \in \Dom (\nabla)$, $t\in [0,T]$, and 
 $(D_tD_tF)_{t\in [0,T]}\in L^2([0,T],\mu)$, $\P$-a.s. 
\end{definition} 
\noindent 
 If $F\in {\cal S}$ has the form \eqref{e1} we have 
$$ 
 \Delta F 
 = 
 \sum_{i,j=1}^n 
 \langle 
 \Gamma h_i 
 , 
 \Gamma h_j 
 \rangle_{L^2([0,T],\mu )} 
 \partial_i \partial_j f_n 
 \left(
 X^u ( h_1 ) 
 ,
 \ldots
 ,
 X^u ( h_n ) 
 \right)
. 
$$ 
 Unlike the Gross Laplacian $\Delta_G$ defined by 
$$\Delta_G F = \trace_{H^{\otimes 2}} \nabla \nabla F 
, 
$$ 
 the operator $\Delta$ is closable, as shown in the following
 proposition. 
\begin{prop}
\label{prop:closability}
 Closability of $\Delta$. 
 For any sequence $(F_n)_{n \in \inte}$ of random variables converging
 to $0$ in $L^2 (\Omega , \P_u )$ and such that
 $(\Delta F_n)_{n \in \inte}$ converges in $L^2 (\Omega , \P_u )$, we have
$$\lim_{n\to \infty }
 \Delta F_n = 0
.
$$
\end{prop}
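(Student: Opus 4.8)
The plan is to exploit the explicit action of $\Delta$ on cylindrical functionals together with the integration-by-parts formula \eqref{intparpartiesvar1/N}, and to reduce the closability of $\Delta$ to the already-known closability of the Malliavin derivative $\nabla$ (Proposition~1.2.1 of \cite{nualartm2}). The key observation is that, for $F$ and $G$ cylindrical, $\Delta$ is self-adjoint with respect to $\E_u$ in the sense that $\E_u[G\,\Delta F] = \E_u[F\,\Delta G]$, or more usefully that $\E_u[G\,\Delta F] = -\E_u[\langle \nabla G,\nabla(\Gamma\nabla F)\rangle_{\dots}]$ up to a first-order correction coming from the Ornstein--Uhlenbeck structure. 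Indeed, writing $\Delta F = \int_0^T D_t D_t F\,\mu(dt)$ and applying \eqref{intparpartiesvar1/N} coordinate-wise (using that $D_t F=(\Gamma\nabla F)(t)$ and that $\nabla$ and $\delta$ are adjoint), one obtains for $F,G\in\mathcal S$ an identity of the form
\begin{equation}
\label{eq:selfadj}
 \E_u[\,G\,\Delta F\,]
 = \E_u\!\left[\,F\,\Delta G\,\right]
\end{equation}
where the right-hand side involves $G$ only through $\nabla G$ and $F$ only through $\nabla F$ and $X^u$, all of which are controlled by $\nabla$-convergence on a dense set of test functionals.

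First I would make \eqref{eq:selfadj} precise: expand both $\Delta F$ and $\Delta G$ using the formula
$\Delta F=\sum_{i,j}\langle\Gamma h_i,\Gamma h_j\rangle_{L^2([0,T],\mu)}\partial_i\partial_j f_n(X^u(h_1),\dots)$
valid for $F\in\mathcal S$ of the form \eqref{e1}, and verify by the finite-dimensional Gaussian integration by parts (equivalently, by iterating \eqref{intparpartiesvar1/N}) that $\E_u[G\,\Delta F]$ equals a symmetric bilinear expression in $(f_n,g_m)$ that manifestly does not privilege $F$ over $G$; this yields \eqref{eq:selfadj} for all $F,G\in\mathcal S$. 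Second, given a sequence $(F_n)\subset\Dom(\Delta)$ with $F_n\to 0$ in $L^2(\Omega,\P_u)$ and $\Delta F_n\to\eta$ in $L^2(\Omega,\P_u)$, I would test against an arbitrary $G\in\mathcal S$: by \eqref{eq:selfadj},
$$
 \E_u[\,G\,\Delta F_n\,] = \E_u[\,F_n\,\Delta G\,] \longrightarrow 0,
$$
since $\Delta G\in L^2(\Omega,\P_u)$ (as $G$ is cylindrical) and $F_n\to 0$ in $L^2$. Hence $\E_u[G\,\eta]=0$ for all $G$ in the dense set $\mathcal S$, forcing $\eta=0$, which is exactly $\lim_n\Delta F_n=0$.

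The main obstacle is the justification of \eqref{eq:selfadj} together with the passage to general $F\in\Dom(\Delta)$ rather than merely $F\in\mathcal S$: one must check that the integration by parts bringing $\Delta$ onto $G$ does not leave boundary-type terms and that all the integrals $\int_0^T(\cdot)\,\mu(dt)$ and the traces converge absolutely, using $\dot\Gamma h_k\in L^2$ and the summability built into $\Dom(\Delta)$. In fact it is cleanest to prove closability only needs \eqref{eq:selfadj} with $G\in\mathcal S$ and $F_n\in\Dom(\Delta)$, so the delicate side is always the smooth one; the contrast with the Gross Laplacian $\Delta_G$ — which is \emph{not} closable — is precisely that $\Delta=\trace_{L^2([0,T],d\mu)^{\otimes2}}DD$ carries the extra smoothing by $\Gamma$, making the bilinear form in \eqref{eq:selfadj} finite, whereas $\trace_{H^{\otimes2}}\nabla\nabla$ does not. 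Once \eqref{eq:selfadj} is in hand the argument is the standard duality proof of closability and requires no further analytic input beyond the density of $\mathcal S$ in $L^2(\Omega,\P_u)$.
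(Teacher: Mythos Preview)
Your overall strategy --- pair $\Delta F_n$ against a cylindrical test functional $G\in\mathcal S$, integrate by parts twice to move both derivatives onto $G$, and use $F_n\to 0$ in $L^2$ together with density of $\mathcal S$ --- is exactly the paper's approach. The gap is in your identification of the adjoint: the identity $\E_u[G\,\Delta F]=\E_u[F\,\Delta G]$ is \emph{false}. The operator $\Delta$ is a pure second-order trace $\int_0^T D_tD_t(\cdot)\,\mu(dt)$, and with respect to a Gaussian measure such an operator is never symmetric; integration by parts against the Gaussian density produces first-order and zeroth-order drift terms (the ``Ornstein--Uhlenbeck correction'' you mention and then drop). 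Concretely, already in one dimension $\int g f''\,d\gamma = \int f\,(g''-2xg'+(x^2-1)g)\,d\gamma \neq \int f\,g''\,d\gamma$. Your own parenthetical remark about the right-hand side of \eqref{eq:selfadj} ``involving $F$ only through $\nabla F$ and $X^u$'' is inconsistent with the displayed formula $\E_u[F\,\Delta G]$, which shows the confusion.

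The paper fixes this by computing the correct adjoint directly: two applications of the duality \eqref{intparpartiesvar1/N} give
\[
\E_u[G\,\Delta F_n]
=\int_0^T \E_u\bigl[F_n\,\delta(\chi_t\,\delta(\chi_t G))\bigr]\,\mu(dt),
\]
and since $G\in\mathcal S$ the integrand $\delta(\chi_t\,\delta(\chi_t G))$ lies in $L^2(\Omega,\P_u)$ uniformly in $t$ (it equals $\Delta G$ plus terms of the form $G(X^u_t)^2$, $X^u_t D_tG$, $G\gamma(t,t)$), so Cauchy--Schwarz yields $|\E_u[G\,\Delta F_n]|\le \|F_n\|_{L^2}\int_0^T\|\delta(\chi_t\delta(\chi_t G))\|_{L^2}\,\mu(dt)\to 0$. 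Once you replace your incorrect $\Delta G$ by the true adjoint $\int_0^T\delta(\chi_t\delta(\chi_t\,\cdot))\,\mu(dt)$ the rest of your argument goes through verbatim; the point, as you correctly observe, is that the $\Gamma$-smoothing built into $D$ makes this double divergence land in $L^2$ for cylindrical $G$, which is precisely what fails for the Gross Laplacian.
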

\begin{proof} 
 Let $(G_n)_{n \in \inte}$ a sequence in $\mathcal{S}$ converging to $0$ in 
 $L^2(\Omega, \P_u )$, and such that $(\Delta G_n)_{n \in \inte}$ converges to $F$ 
 in $L^2(\Omega, \P_u )$. 
 For all $G\in\mathcal{S}$ we have, in the notation of $(A)$: 
\begin{eqnarray*}
\disp{\vert \langle \Delta G_n, G\rangle_{L^2 (\Omega , \P_u )} \vert} &=& \disp{ \left\vert 
 \E_u \left[ G \int_0^T D_tD_t G_n \mu(dt) \right] \right\vert } \\
&=& \disp{ \left\vert \int_0^T \E_u [\langle\nabla D_t G_n , 
 \stackrel{}{\chi}_t G \rangle_H] \, \mu(dt) 
 \right\vert } 
\\
&=& \disp{ \left\vert \int_0^T \E_u [D_t G_n \, \delta( 
 \stackrel{}{\chi}_t 
 G) ] \, \mu(dt)  \right\vert } \\
&=& \disp{ \left\vert \int_0^T \E_u [\langle \nabla G_n, 
 \stackrel{}{\chi}_t 
 \delta( 
 \stackrel{}{\chi}_t 
 G) \rangle_H ] \, \mu(dt)  \right\vert } \\
&=& \disp{ \left\vert \int_0^T \E_u [G_n \delta( 
 \stackrel{}{\chi}_t 
 \delta( 
 \stackrel{}{\chi}_t 
 G)) ] \, \mu(dt)  \right\vert } \\
&\leq& \disp{ \|G_n\|_{L^2 (\Omega, \P_u )} \int_0^T \|\delta( 
 \stackrel{}{\chi}_t 
 \delta( 
 \stackrel{}{\chi}_t 
 G))\|_{L^2(\Omega, \P_u )} \, \mu(dt), }
\end{eqnarray*} 
hence $\langle F, G \rangle_{L^2(\Omega, \P_u )} =0, \; G \in \mathcal{S}$, 
 which implies $F=0$.
\end{proof}
\noindent 
 We will say that a random variable $F$ in $\Dom (\Delta )$ 
 is $\Delta$-superharmonic on $\Omega$ if 
\begin{equation} 
\label{sph} 
\Delta F (\omega ) \leq 0, \qquad \P (d\omega)-a.s. 
\end{equation} 
\begin{remark} 
 In the independent increment case where $\gamma (s,t)$ is given by 
$$\gamma (s,t) = \int_0^{s\wedge t} 
 \sigma_u^2 dt 
, 
 \qquad 
 s,t\in [0,T] 
, 
$$ 
 we have 
\begin{equation} 
\label{intparpartiesvar1/N.1} 
\delta ( v ) = \int_0^T \dot{v}_t dX^u_t, 
\end{equation} 
 for every ${\cal F}_t$-adapted process $v \in L^2 (\Omega ; H , \P_u )$. 
\end{remark} 
\section{Superefficient drift estimators}
\label{3} 
 Our aim is to construct a superefficient estimator of $u$ of the form 
 $X+\xi$, whose mean square error is strictly smaller than the 
 minimax risk $\crb (\gamma , \mu , \hat{u})$ of Proposition~\ref{minimax} 
 when $\xi \in L^2 ([0,T]\times \Omega , \P_u \otimes \mu )$ 
 is a suitably chosen stochastic process. 
 This estimator will be biased and anticipating with respect 
 to the Brownian filtration. 
 In the next lemma we follow Stein's argument which 
 uses integration by parts but we replace \eqref{ibp2} by
 the duality relation \eqref{intparpartiesvar1/N} 
 between the gradient and divergence operators on Gaussian space. 
 The results of this section are valid in both frameworks $(A)$ and $(B)$. 
\begin{lemma} 
\label{lemma1} 
 {Unbiased risk estimate}. 
 For any $\xi \in L^2 (\Omega \times [0,T] , \P_u \otimes \mu )$ such that 
 $\xi_t \in \Dom (\nabla)$, $t\in [0,T]$, and 
 $(D_t\xi_t)_{t\in [0,T]}\in L^1 (\Omega \times [0,T] , \P_u \otimes \mu )$, we have 
\begin{equation} 
\label{eq:erreur}
 \E_u 
 \left[ 
 \Vert X + \xi - u \Vert_{L^2([0,T], \mu )}^2
 \right]
 = 
 \crb (\gamma ,\mu, \hat{u} ) 
 + 
 \Vert \xi \Vert_{L^2 (\Omega \times [0,T] , \P_u \otimes \mu )}^2
 + 2 
 \E_u \left[
 \int_0^T 
 D_t \xi_t 
 \mu ( dt ) 
 \right] 
. 
\end{equation}  
\end{lemma}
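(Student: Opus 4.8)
The plan is to expand the square and identify the three resulting terms with the right-hand side of \eqref{eq:erreur}. Writing $X + \xi - u = X^u + \xi$ in the notation $X^u = X - u$ of Section~\ref{2.1}, I would start from the pointwise (in $\omega$) identity
$$
 \Vert X^u + \xi \Vert_{L^2([0,T],\mu)}^2
 =
 \Vert X^u \Vert_{L^2([0,T],\mu)}^2
 + 2 \int_0^T X^u_t \, \xi_t \, \mu ( dt )
 + \Vert \xi \Vert_{L^2([0,T],\mu)}^2 ,
$$
and then take $\E_u$ of both sides, handling each term separately.

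For the first term, $\E_u \left[ \Vert X^u \Vert_{L^2([0,T],\mu)}^2 \right] = \int_0^T \E_u [ |X^u_t|^2 ] \, \mu(dt) = \int_0^T \langle \chi_t , \Gamma \chi_t \rangle \, \mu(dt) = \crb ( \gamma , \mu , \hat{u} )$, since $X^u$ has the same law under $\P_u$ as $X$ under $\P$, so $\E_u[|X^u_t|^2] = \langle \chi_t , \chi_t \rangle_H = \gamma(t,t)$; this is precisely the minimax risk of $\hat{u}=X$ from Proposition~\ref{minimax}. The third term equals $\Vert \xi \Vert_{L^2 (\Omega \times [0,T] , \P_u \otimes \mu )}^2$ by definition of the norm, and is finite by hypothesis.

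The crux is the cross term. By Fubini's theorem — applicable because $\xi \in L^2(\Omega \times [0,T], \P_u \otimes \mu)$ and $X^u \in L^2$, so that $(t,\omega) \mapsto X^u_t \xi_t$ lies in $L^1(\Omega \times [0,T], \P_u \otimes \mu)$ by Cauchy--Schwarz — we have $\E_u \left[ \int_0^T X^u_t \xi_t \, \mu(dt) \right] = \int_0^T \E_u [ X^u_t \xi_t ] \, \mu(dt)$. For each fixed $t \in [0,T]$, $\xi_t \in \Dom(\nabla)$ by assumption, so Lemma~\ref{prt} applied with $F = \xi_t$ gives $\E_u [ \xi_t X^u_t ] = \E_u [ D_t \xi_t ]$. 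Hence the cross term equals $2 \int_0^T \E_u [ D_t \xi_t ] \, \mu(dt)$, and a second use of Fubini — legitimate exactly because $(D_t \xi_t)_{t\in[0,T]} \in L^1(\Omega \times [0,T], \P_u \otimes \mu)$ by hypothesis — rewrites this as $2 \E_u \left[ \int_0^T D_t \xi_t \, \mu(dt) \right]$. Adding the three contributions yields \eqref{eq:erreur}.

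I expect no single step to be genuinely hard; the only point requiring care is the bookkeeping around the measurability of $t \mapsto \xi_t$ as a process with values in $\Dom(\nabla)$ and the two interchanges of expectation with the $\mu(dt)$-integral (once before and once after invoking Lemma~\ref{prt}). One should check that the $L^2$ and $L^1$ integrability hypotheses in the statement are precisely what is needed to justify these, which is why they appear in the lemma as stated.
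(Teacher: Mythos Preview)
Your proof is correct and follows essentially the same approach as the paper: expand the square as $\Vert X^u + \xi \Vert^2$, identify the first term as $\crb(\gamma,\mu,\hat{u})$ and the third as $\Vert \xi \Vert_{L^2(\Omega\times[0,T],\P_u\otimes\mu)}^2$, and handle the cross term via Lemma~\ref{prt}. The paper's version is terser and does not spell out the Fubini justifications you carefully note, but the argument is the same.
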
 
\begin{Proof} 
 We have 
\begin{eqnarray*} 
\lefteqn{ 
 \E_u \left[ 
 \Vert X + \xi - u \Vert_{L^2([0,T] , d\mu )}^2 \right]
 = 
 { \E_u \left[ \int_0^T \Big{|} X^u_t + \xi_t \Big{|}^2 
 \mu ( dt ) 
 \right] }
} 
\\
&=& { \E_u \left[ \int_0^T | X^u_t |^2 
 \mu ( dt ) 
 \right] + 
 \Vert \xi \Vert_{L^2 ( \Omega \times [0,T] , \P_u \otimes \mu )}^2  +
 2 \E_u \left[ \int_0^T X^u_t \xi_t 
 \mu ( dt ) 
 \right] } \\
 &=& 
 \crb (\gamma ,\mu, \hat{u} ) 
 + 
 \Vert \xi \Vert_{L^2 ( \Omega \times [0,T] , \P_u \otimes \mu )}^2 
 + 2 \E_u \left[ \int_0^T X^u_t \xi_t 
 \mu ( dt ) 
 \right] 
, 
\end{eqnarray*} 
 and apply Lemma~\ref{prt} to obtain \eqref{eq:erreur}. 
\end{Proof} 
\noindent 
 The next proposition specializes the above lemma to 
 processes $\xi$ of the form 
$$\xi_t = D_t \log F, 
 \qquad t\in [0,T], 
$$ 
 where $F$ is an a.s. strictly positive and sufficiently 
 smooth random variable. 
\begin{prop} 
\label{prop:erreurcylindricalfunctions.1} 
 Logarithmic gradient. 
 Stein-type estimator. 
 For any $\P$-a.s. positive random variable 
 $F\in \Dom (\nabla)$ such that $D_t F\in \Dom (\nabla)$, $t\in [0,T]$, 
 and 
 $(D_tD_tF )_{t\in [0,T]}\in L^1 (\Omega \times [0,T] , \P_u \otimes \mu )$, we have 
$$ 
 \E_u 
 \left[ 
 \Vert X + 
 D \log F - u \Vert_{L^2 ([0,T] , d\mu )}^2
 \right] 
 = 
 \crb (\gamma ,\mu, \hat{u} ) 
 - 
 \E_u \left[
 \Vert 
 D \log F \Vert_{L^2 ([0,T], \mu )}^2 
 \right] 
 + 
 2 
 \E_u \left[
 \frac{ \Delta F }{F} 
 \right] 
.
$$ 
\end{prop}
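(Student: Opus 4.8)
The plan is to specialize the unbiased risk estimate of Lemma~\ref{lemma1} to the process $\xi_t := D_t\log F$, $t\in[0,T]$, and then to rewrite the resulting correction term using the Leibniz rule for the operator $D$. Before invoking Lemma~\ref{lemma1} one must verify its hypotheses for this choice of $\xi$: since $F$ is $\P$-a.s.\ strictly positive with $F,D_tF\in\Dom(\nabla)$, the chain rule for $\nabla$ gives $\log F\in\Dom(\nabla)$ and $\nabla\log F=\nabla F/F$, so that $\xi_t=D_t\log F=(\Gamma\nabla\log F)(t)=D_tF/F\in\Dom(\nabla)$ for every $t\in[0,T]$; the assumptions $(D_tD_tF)_{t\in[0,T]}\in L^1(\Omega\times[0,T],\P_u\otimes\mu)$ and $D\log F\in L^2(\Omega\times[0,T],\P_u\otimes\mu)$ then supply the integrability required in Lemma~\ref{lemma1}. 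I expect this domain and integrability bookkeeping --- in particular the control of $1/F$ and $1/F^2$ near the zero set of $F$, and the legitimacy of the integration by parts hidden in Lemmas~\ref{prt} and \ref{lemma1} for this $\xi$ --- to be the only genuinely delicate point; the rest is an algebraic manipulation.

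With $\xi_t=D_t\log F$, Lemma~\ref{lemma1} yields
$$
\E_u\!\left[\Vert X+D\log F-u\Vert_{L^2([0,T],\mu)}^2\right]
=
\crb(\gamma,\mu,\hat u)
+
\E_u\!\left[\Vert D\log F\Vert_{L^2([0,T],\mu)}^2\right]
+2\,\E_u\!\left[\int_0^T D_t(D_t\log F)\,\mu(dt)\right],
$$
since $\Vert\xi\Vert_{L^2(\Omega\times[0,T],\P_u\otimes\mu)}^2=\E_u[\Vert D\log F\Vert_{L^2([0,T],\mu)}^2]$.

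The next step is to compute $D_t(D_t\log F)$. Because $\Gamma$ acts only on the $H$-component, $D=\Gamma\nabla$ inherits the Leibniz rule from $\nabla$, and using $D_t\log F=D_tF/F$ together with $D_t(1/F)=-D_tF/F^2$ we get
$$
D_t(D_t\log F)=D_t\!\left(\frac{D_tF}{F}\right)=\frac{D_tD_tF}{F}-\frac{(D_tF)^2}{F^2}=\frac{D_tD_tF}{F}-(D_t\log F)^2.
$$
Integrating against $\mu(dt)$ and recalling the definition of $\Delta$ as $\int_0^T D_tD_tF\,\mu(dt)$, this gives the pathwise identity
$$
\int_0^T D_t(D_t\log F)\,\mu(dt)=\frac{\Delta F}{F}-\Vert D\log F\Vert_{L^2([0,T],\mu)}^2 .
$$

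Finally, substituting this identity into the expression obtained from Lemma~\ref{lemma1}, the term $+\E_u[\Vert D\log F\Vert_{L^2([0,T],\mu)}^2]$ and the term $-2\,\E_u[\Vert D\log F\Vert_{L^2([0,T],\mu)}^2]$ combine to $-\E_u[\Vert D\log F\Vert_{L^2([0,T],\mu)}^2]$, while $+2\,\E_u[\Delta F/F]$ remains, producing exactly the asserted formula. Thus the proof reduces to (i) the a priori verification that $\xi=D\log F$ meets the hypotheses of Lemma~\ref{lemma1}, and (ii) the Leibniz-rule computation above.
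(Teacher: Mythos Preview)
Your proof is correct and follows essentially the same route as the paper: apply Lemma~\ref{lemma1} with $\xi_t=D_t\log F$, then use the Leibniz rule to rewrite $D_tD_t\log F$ in terms of $D_tD_tF/F$ and $(D_t\log F)^2$, and collect terms. The paper states the key pointwise identity in the equivalent form $|D_tF/F|^2+2D_tD_t\log F=2D_tD_tF/F-|D_tF/F|^2$, but this is the same computation you carry out.
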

\begin{Proof}
 From \eqref{eq:erreur} we have 
\begin{eqnarray*} 
\lefteqn{ 
 \! \! \! \! \! \! \! \! \! \! \! 
 \E_u 
 \left[ 
 \Vert X + D \log F - u \Vert_{L^2 ([0,T],\mu )}^2
 \right] 
} 
\\ 
 & = & 
 \crb (\gamma ,\mu, \hat{u} ) 
 + 
 \Vert D \log F \Vert_{L^2 (\Omega \times [0,T] , \P_u \otimes \mu )}^2
 + 
 2 \E_u \left[ \int_0^T D_t D_t \log F 
 \mu ( dt ) 
 \right] 
\\ 
& = &
 \crb (\gamma ,\mu, \hat{u} ) 
 + 
 \E_u \left[
 \int_0^T \left(
 \bigg| \frac{D_t F }{F} \bigg|^2
 + 2 D_t D_t \log F
 \right)
 \mu ( dt ) 
 \right] 
, 
\end{eqnarray*}
 and we use the relation 
$$ 
 \bigg| \frac{D_t F }{F} \bigg|^2
 + 2 D_t D_t \log F
 = 
 2 \frac{D_tD_t F}{F} 
 - \bigg| \frac{D_t F }{F} \bigg|^2 
 , 
 \qquad 
 t\in [0,T]. 
$$ 
\end{Proof}
\noindent 
 From the above proposition it suffices that $F$ be $\Delta$-superharmonic for 
 $X + D \log F$ to be superefficient. 
 In this case we have 
\begin{equation} 
\label{sh} 
 \E_u 
 \left[ 
 \Vert X + 
 D \log F - u \Vert_{L^2([0,T] , d\mu )}^2
 \right] 
 \leq 
 \crb (\gamma ,\mu, \hat{u} ) 
 - 
 \E_u \left[
 \Vert 
 D \log F \Vert_{L^2([0,T] , d\mu )}^2 
 \right] 
, 
\end{equation} 
 with equality in \eqref{sh} when $F$ is $\Delta$-harmonic. 
\\

\noindent 
 In the next proposition we show 
 that the $\Delta$-superharmonicity of $F$ is not necessary 
 for $X + D \log F$ to be superefficient, namely 
 the $\Delta$-superharmonicity  
 of $F$ can be replaced by the $\Delta$-superharmonicity 
 of $\sqrt{F}$, which is a weaker assumption, see \cite{fourdrinier} 
 in the finite dimensional case. 
 In particular, $X+D \log F$ is a superefficient estimator of $u$ 
 if $\Delta \sqrt{F} < 0$ on a set of strictly positive $\P$-measure. 
\begin{prop} 
\label{prop:erreurcylindricalfunctions} 
 Stein-type estimator. 
 For any $\P$-a.s. positive random variable 
 $F\in \Dom (\nabla)$ such that $D_t F\in \Dom (\nabla)$, $t\in [0,T]$, 
 and 
 $(D_tD_tF )_{t\in [0,T]}\in L^1 (\Omega \times [0,T] , \P_u \otimes \mu )$, we have 
\begin{equation} 
\label{hjk} 
 \E_u 
 \left[ 
 \Vert X + 
 D \log F - u \Vert_{L^2 ([0,T] , d\mu )}^2
 \right] 
 = 
 \crb (\gamma ,\mu, \hat{u} ) 
 + 
 4 
 \E_u 
 \left[ \frac{\Delta \sqrt{F}}{\sqrt{F}} \right]
.
\end{equation} 
\end{prop}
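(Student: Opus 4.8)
The plan is to reduce \eqref{hjk} to the identity of Proposition~\ref{prop:erreurcylindricalfunctions.1} by rewriting the two terms $\E_u[\Vert D\log F\Vert_{L^2([0,T],\mu)}^2]$ and $\E_u[\Delta F/F]$ appearing there in terms of $\sqrt{F}$. Set $G:=\sqrt{F}$, which is $\P$-a.s. strictly positive and satisfies $F=G^2$; the chain rule for the Malliavin derivative gives
$$
D_t F = 2\,G\,D_t G,
\qquad
D_t D_t F = 2\,(D_t G)^2 + 2\,G\,D_t D_t G,
\qquad t\in[0,T],
$$
so that $D_t\log F = 2\,D_t G/G$ and, after dividing the second relation by $F=G^2$ and integrating against $\mu(dt)$,
$$
\Vert D\log F\Vert_{L^2([0,T],\mu)}^2 = 4\int_0^T\frac{(D_tG)^2}{G^2}\,\mu(dt),
\qquad
\frac{\Delta F}{F} = 2\int_0^T\frac{(D_tG)^2}{G^2}\,\mu(dt) + 2\,\frac{\Delta G}{G}.
$$

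First I would substitute these into the formula
$$
\E_u\left[\Vert X+D\log F-u\Vert_{L^2([0,T],d\mu)}^2\right]
= \crb(\gamma,\mu,\hat{u})
- \E_u\left[\Vert D\log F\Vert_{L^2([0,T],\mu)}^2\right]
+ 2\,\E_u\left[\frac{\Delta F}{F}\right]
$$
of Proposition~\ref{prop:erreurcylindricalfunctions.1}. The two occurrences of $\int_0^T (D_tG)^2/G^2\,\mu(dt)$ then cancel, leaving inside the expectation exactly
$$
-\,\Vert D\log F\Vert_{L^2([0,T],\mu)}^2 + 2\,\frac{\Delta F}{F}
= 4\,\frac{\Delta G}{G}
= 4\,\frac{\Delta\sqrt{F}}{\sqrt{F}},
$$
which is \eqref{hjk}. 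Equivalently, one may compute $\Delta\sqrt{F}$ directly from $D_t\sqrt{F}=D_tF/(2\sqrt{F})$, which yields $D_tD_t\sqrt{F}=D_tD_tF/(2\sqrt{F})-(D_tF)^2/(4F^{3/2})$ and hence the pointwise identity $4\,\Delta\sqrt{F}/\sqrt{F} = 2\,\Delta F/F-\Vert D\log F\Vert_{L^2([0,T],\mu)}^2$; combined with Proposition~\ref{prop:erreurcylindricalfunctions.1} this gives \eqref{hjk} after taking $\E_u$.

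The only step requiring real care is that the chain rule has been applied to the map $x\mapsto\sqrt{x}$, which is not $C^1$ at the origin. Since $F$ is $\P$-a.s. strictly positive, I would handle this by localization: choose a smooth function $\psi_n$ with $\psi_n(x)=\sqrt{x}$ on $\{x\ge 1/n\}$, apply the chain rule to $\psi_n(F)$, and pass to the limit along $\{F>1/n\}\uparrow\Omega$. The standing hypotheses $F\in\Dom(\nabla)$, $D_tF\in\Dom(\nabla)$ and $(D_tD_tF)_{t\in[0,T]}\in L^1(\Omega\times[0,T],\P_u\otimes\mu)$ are exactly what ensures $\sqrt{F}\in\Dom(\Delta)$ and provides the integrability needed to split the expectations and make sense of all the integrals above; I expect this verification, rather than the (immediate) algebraic identity, to be the main obstacle.
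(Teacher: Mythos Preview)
Your proposal is correct and follows essentially the same route as the paper: both arguments derive the pointwise identity
$4\,\Delta\sqrt{F}/\sqrt{F}=2\,\Delta F/F-\Vert D\log F\Vert_{L^2([0,T],\mu)}^2$
via the chain rule (the paper writes it as $2D_tD_tF/F-|D_tF/F|^2=(4/\sqrt{F})D_tD_t\sqrt{F}$, which is exactly your ``equivalently'' computation), and then conclude from the preceding risk identity. The only cosmetic difference is that the paper invokes Lemma~\ref{lemma1} directly rather than Proposition~\ref{prop:erreurcylindricalfunctions.1}, and it does not spell out the localization for the chain rule applied to $x\mapsto\sqrt{x}$, simply assuming $\sqrt{F}\in\Dom(\Delta)$.
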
 
\begin{Proof} 
 For any $F\in \Dom ( \nabla )$ such that $F>0$, $\P$-a.s., 
 and $\sqrt{F} \in \Dom (\Delta^\vide)$, we have 
$$ 
 2 \frac{D_tD_t F}{F} - \bigg| \frac{D_t F }{F} \bigg|^2
 = 
 \frac{2}{\sqrt{F}}
 D_t \left( \frac{D_t F}{\sqrt{F}} \right) 
 = 
 \frac{4}{\sqrt{F}}
 D_t D_t \sqrt{F}, 
 \qquad t\in [0,T] 
, 
$$ 
 which implies 
\begin{equation} 
\label{qsd} 
 4 
 \frac{\Delta \sqrt{F}}{\sqrt{F}}
 = 
 2 \frac{ \Delta F }{F} 
 - \int_0^T 
 | D_t \log F |^2 
 \mu ( dt ) 
, 
\end{equation} 
 and allows us to conclude from Lemma~\ref{lemma1}. 
\end{Proof} 
\noindent 
 Relation \eqref{hjk} extends to any $F\in \Dom (\nabla )$ such that 
 $\sqrt{F} \in \Dom (\Delta )$, and $F \geq 0$, $\Delta \sqrt{F} \leq 0$, $\P$-a.s. 
\\

\noindent 
 In case $(X_t)_{t\in [0,T]}$ is a Brownian motion with constant 
 variance $\sigma_t = \sigma$, $t\in [0,T]$, we have 
\begin{equation} 
\label{sh2} 
 \E_u 
 \left[ 
 \Vert X + 
 D \log F - u \Vert_{L^2([0,T])}^2
 \right] 
 \leq 
 \frac{\sigma^2T^2}{2} 
 + 
 4 
 \E_u \left[
 \frac{\Delta \sqrt{F}}{\sqrt{F}} 
 \right] 
. 
\end{equation} 
 Given $(X_t^{\sigma,1})_{t\in [0,T]}, \ldots , (X_t^{\sigma,N})_{t\in [0,T]}$ are 
 $N$ independent samples of $(X_t)_{t\in [0,T]}$, the process 
 $\bar{X}$ defined in \eqref{xbar} satisfies 
$$ 
 \E^{\sigma/N}_u
 \left[ 
 \Vert \bar{X} + 
 D \log F - u \Vert_{L^2([0,T])}^2
 \right] 
 = 
 \frac{1}{N} 
 \crb (\sigma,\mu, \hat{u} ) 
 + 
 \frac{4}{N^2} 
 \E_u \left[ 
 \frac{\Delta \sqrt{F}}{\sqrt{F}} 
 \right] 
. 
$$ 

\noindent 
 As in \cite{stein}, the superefficient estimators constructed in this way 
 are minimax in the sense that from Proposition~\ref{minimax} 
 and Proposition~\ref{prop:erreurcylindricalfunctions.1}, 
 for all $u\in H$ 
 we have 
$$\E_u 
 \left[ 
 \Vert X + D \log F - u \Vert_{L^2([0,T], \mu )}^2
 \right] 
 < 
 \crb (\gamma ,\mu, \hat{u} ) 
  = \inf_\xi 
 \sup_{v \in \Omega } 
 \E_v \left[ \int_0^T | \xi_t - v_t|^2 dt \right] 
, 
$$ 
 provided $\Delta \sqrt{F} < 0$ on a set of strictly positive $\P$-measure, 
 thus showing that the minimax estimator 
 $\hat{u} = (X_t)_{t\in [0,T]}$ is inadmissible. 
\\ 

\noindent 

\noindent 
 Both estimators $X_t+D_t \log F$ and 
 $X_t +\E_u [D_t \log F \mid {\cal F}_t]$ 
 have bias 
$$b_t = \E_u [X_t + D_t \log F -u_t] 
 = \E_u [ D_t \log F ] 
, \qquad 
 t\in [0,T], 
$$ 
 which can be bounded as follows from  \eqref{qsd}: 
\begin{eqnarray*} 
\Vert b \Vert_{L^2([0,T], \mu )}^2 
 & = & 
 \int_0^T 
 | 
 \E_u [ D_t \log F ] 
 |^2 dt 
\\ 
 & \leq & 
 \E_u \left[ \int_0^T 
 | D_t \log F |^2 dt 
 \right] 
\\ 
 & =& 
 2 
 \E_u \left[ 
 \frac{ \Delta F }{F} 
 \right] 
 - 
 4 
 \E_u \left[ 
 \frac{\Delta \sqrt{F}}{\sqrt{F}} 
 \right] 
. 
\end{eqnarray*} 
\begin{remark} 
 In the independent increment case of Section~\ref{2}, 
 the formulas obtained in this section also hold for $u$ an adapted 
 process in $L^2 (\Omega \times [0,T] )$. 
 However, in this case the computation of the gradient $D \log F$ 
 requires in principle the knowledge of $X^u$, except when $u$ is 
 deterministic, in which case the knowledge of $X$ is sufficient. 
 Thus, assuming $u$ to be deterministic will be necessary 
 for the applications of Section~\ref{4}. 
\end{remark} 
\section{Superharmonic functionals} 
\label{app} 
 In this section we give examples of nonnegative superharmonic
 functionals with respect to the Laplacian $\Delta$. 
 We start by reviewing the construction of such functionals 
 using potential theory on the Gaussian space $(\Omega , H  , \P )$, 
 and next we turn to cylindrical functionals which will be used 
 in the numerical applications of Section~\ref{4}. 
 We assume that $( \Gamma h_k )_{k \geq 1}$ is orthogonal in $L^2 ([0,T ] , d\mu )$, 
 and we let 
$$ 
 \lambda_k = \Vert \Gamma h_k \Vert_{L^2([0,T],\mu)}, 
\qquad 
 k \geq 1. 
$$ 
 The sequence $(h_k)_{k\geq 1}$ can be realized as the solution of the eigenvalue problem 
\begin{equation} 
\label{*1} 
 \Gamma h_k = - \lambda^2_k \ddot{h}_k, 
 \qquad 
 \dot{h}_k (T) = 0, \qquad 
 k\geq 1, 
\end{equation} 
 in case $(A)$, provided $\mu (dt)=dt$, and 
$$\Gamma h_k = \lambda^2_k h_k, \qquad 
 k\geq 1, 
$$ 
 in case $(B)$ for general $\mu$. 

\subsubsection*{Potentials} 
 We refer to \cite{grossp} and \cite{Goodman} for
 the notion of harmonicity on the Wiener space with respect to
 the Gross Laplacian. 
\noindent From our orthonormality assumption on $(h_k)_{k\geq 1}$, 
 the Laplacian $\Delta$ is written as 
$$ 
 \Delta^\vide F 
 = 
 {\sum_{i=1}^n
 \partial_i^2 f_n 
 \left(
  \int_0^T \dot{h}_1 ( s ) dX^u_s 
 ,
 \ldots
 ,
 \int_0^T \dot{h}_n( s ) dX^u_s 
 \right)}
$$ 
 on cylindrical functionals. 
 Let $(W^\Omega_t)_{t \geq 0}$ denote the standard $\Omega$-valued Wiener
 process with generator $\frac{1}{2} \Delta_G$ 
 on $(\tilde{\Omega} , \tilde{\cal F} , \tilde{\P})$, represented as
\begin{equation}
\label{cv} 
 W^\Omega_t = \sum_{n=1}^\infty
 \int_0^\cdot \dot{h}_n(s) \sigma^2_s ds
 \frac{\beta_n ( t ) }{\Vert h_n \Vert_{H^\vide}}
, 
 \qquad t\in \real_+,
\end{equation} 
 where $(\beta_n(t))_{t\in \real_+}$, $n\geq 1$, 
 are independent standard Brownian motions 
 on $(\tilde{\Omega} , \tilde{\cal F} , \tilde{\P})$, 
 given as 
$$\beta_n ( t ) = \int_0^T 
 \frac{\dot{h}_n (r)}{\Vert h_n \Vert_{H^\vide}} 
 dW^\Omega_t( r )
,
 \qquad t \in \real_+,
 \quad 
 n\geq 1
.
$$ 
 We have the covariance relation 
$$\tilde{\E} 
 \left[ 
 \int_0^T 
 \dot{v}_1 (r) dW^\Omega_s (r) 
 \int_0^T 
 \dot{v}_2 (r) dW^\Omega_t (r) 
 \right] = 
 ( s \wedge t ) 
 \langle 
 v_1 , 
 v_2 
 \rangle_{H^\vide} 
,
 \qquad
 s,t \in \real_+ 
, 
 \quad
 v_1, v_2 \in H 
. 
$$
 In other terms we have 
\begin{eqnarray*} 
\lefteqn{ 
 \tilde{\E}
 [ W^\Omega_t ( a ) W^\Omega_s ( b ) ]
 = 
 ( s\wedge t )
 \sum_{n=1}^\infty
 \frac{ 
 \int_0^a \dot{h}_n(s) \sigma^2_s ds 
 \int_0^b \dot{h}_n(s) \sigma^2_s ds 
 }{\Vert h_n \Vert_{H^\vide}^2}
} 
\\ 
 & = &
 ( s\wedge t )
\left<
 \sum_{n=1}^\infty
 \frac{\dot{h}_n}{\Vert h_n \Vert_{H^\vide}^2}
 \int_0^a \dot{h}_n(s) \sigma^2_s ds 
 , 
 \sum_{n=1}^\infty 
 \frac{\dot{h}_n}{\Vert h_n \Vert_{H^\vide}^2} 
 \int_0^b \dot{h}_n(s) \sigma^2_s ds 
 \right>_{L^2([0,T],\sigma^2_t dt)} 
\\ 
 & = &
 ( s\wedge t )
 \langle {\bf 1}_{[0,a]}
 ,
 {\bf 1}_{[0,b]}
 \rangle_{L^2([0,T],\sigma^2_t dt)}
\\
 & = &
 ( s \wedge t )
 \int_0^{a \wedge b} 
 \sigma^2_r dr
, \qquad 0 \leq a, b \leq T, 
 \quad s,t \in \real_+,
\end{eqnarray*} 
 which shows that $(W^\Omega_t(a))_{a\in [0,T]}$ is a continuous Gaussian 
 martingale with quadratic variation $\sigma^2_a da$ 
 for fixed $t\in \real_+$. 
\\ 

\noindent
 Denote by $(B_t)_{t\in \real_+}$ the $H^\vide $-valued
 Wiener process represented as
$$ 
 B_t
 =
 \sum_{n=0}^\infty
 \frac{h_n}{\Vert h_n \Vert_{H^\vide}^2}
 \beta_n ( t )
,
$$ 
 with $\beta_n ( t ) = \langle B_t , h_n \rangle_H$, 
 $n\geq 1$, and covariance
$$\tilde{\E} [\langle B_s , h_n \rangle_{H^\vide } 
 \langle B_t , h_m \rangle_{H^\vide } ] =
 {\bf 1}_{\{ n = m \}} ( s \wedge t ),
 \qquad
 s,t \in \real_+,
$$
 i.e.
$$
 \tilde{\E}
 [\langle B_t , v_1 \rangle_{H^\vide }
 \langle B_s ,v_2 \rangle_{H^\vide } ]
 = ( s \wedge t )
 \langle Q v_1, v_2\rangle_{H^\vide}
,
 \qquad
 s,t \in \real_+,
 \quad
 v_1,v_2\in H^\vide ,
$$
 where $Q:H^\vide  \to H^\vide $ is the operator with eigenvalues
 $\{ \Vert h_n \Vert_{H^\vide}^{-2} \ : \ n \geq 1 \}$
 in the Hilbert basis $(h_n)_{n\geq 1}$.
 It\^o's formula for Hilbert-valued Wiener processes, cf.
 Theorem~4.17 of \cite{daprato}, shows that
$$F(B_t) 
 = F(B_0)
 +
 \int_0^t
 \langle D F (B_s) , dB_s \rangle_{H^\vide}
 +
 \frac{1}{2}
 \int_0^t
 \Delta^\vide F (B_s)
 ds
, \qquad
 F\in {\cal S},
$$
 hence $(B_t)_{t\in \real_+}$ has generator $\frac{1}{2} \Delta^\vide$.
\\

\noindent
 Dynkin's formula, cf. \cite{Dynkin}, Theorem~5.1, shows that
 for all stopping time $\tau$ such that
 $\tilde{\E} [\tau \mid B_0 = \omega ] < \infty$ we have,
 $\P^\sigma (d\omega )$-a.s.: 
$$
 \tilde{\E}
 [
 F ( B_\tau )
 \mid B_0 = \omega
 ]
 -
 F ( \omega )
 =
 \frac{1}{2}
 \tilde{\E}
 \left[
 \int_0^\tau
 \Delta^\vide F ( B_s )
 ds
 \mid B_0 = \omega
 \right]
,
$$
 hence $\Delta^\vide F \leq 0$ implies
$$
 F(\omega )
 \geq
 \tilde{\E} [
 F( B_\tau ) \mid B_0 = \omega
 ]
.
$$
 For $r>0$, let
$$\tau_{r}
 = \inf \{ t \in \real_+ \ : \ B_t \notin {\eufrak B}_r (B_0)\}
$$
 denotes the first exit time of $(B_t)_{t\in [0,T]}$ from the open ball
 ${\eufrak B}_r (\omega )$ of radius $r>0$, centered at $B_0 = \omega \in \Omega$.
 We have the following converse.
\begin{prop} 
 Let $F\in \Dom (\Delta^\vide )$ be such that $\Delta^\vide F$ is continuous 
 on $\Omega$, and assume that there exists $r_0>0$ such that
\begin{equation} 
\label{jkl} 
 F(\omega )
 \geq
  \tilde{\E} [
 F( B_{\tau_r} ) \mid B_0 = \omega
 ]
, \qquad
 \P^\sigma_u (d\omega )-a.s., 
 \quad 
 0 < r < r_0
. 
\end{equation} 
 Then $F$ is $\Delta^\vide$-superharmonic on $\Omega$ in the sense of 
 Relation \eqref{sph}. 
\end{prop}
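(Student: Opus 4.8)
The plan is to run Dynkin's formula in reverse, using the continuity of $\Delta^\vide F$ together with the full topological support of the Gaussian measure. First I would record that $\tau_r$ has finite and strictly positive conditional expectation. Applying optional stopping to the real martingale $\|B_t-B_0\|_{H}^2 - t\sum_{n\geq 1}\|h_n\|_{H}^{-2}$ (the series converges since $B_t$ is constructed as an $H$-valued process) and using $\|B_{\tau_r}-B_0\|_{H}=r$ by path continuity gives
$$
 \tilde{\E}[\tau_r \mid B_0=\omega]
 = \frac{r^2}{\sum_{n\geq 1}\|h_n\|_{H}^{-2}}
 \in (0,\infty),
$$
uniformly in $\omega\in\Omega$. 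Hence the Dynkin formula established just above applies to the stopping time $\tau_r$ for every $0<r<r_0$, and assumption \eqref{jkl} yields
$$
 \tilde{\E}\!\left[\int_0^{\tau_r}\Delta^\vide F(B_s)\,ds \;\Big|\; B_0=\omega\right]
 = 2\Big(\tilde{\E}[F(B_{\tau_r})\mid B_0=\omega]-F(\omega)\Big)\leq 0,
 \qquad 0<r<r_0,
$$
for $\P^\sigma_u$-a.e. $\omega\in\Omega$.

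Next I would argue by contradiction. Suppose $\Delta^\vide F(\omega_0)>0$ for some $\omega_0\in\Omega$; by continuity there are $\delta,\rho>0$ with $\Delta^\vide F\geq \delta$ on the ball ${\eufrak B}_{2\rho}(\omega_0)$. The ball ${\eufrak B}_{\rho}(\omega_0)$ has strictly positive $\P^\sigma_u$-measure (a nondegenerate Gaussian measure has full support), so we may choose $\omega_1\in{\eufrak B}_{\rho}(\omega_0)$ for which the displayed inequality holds. Taking $0<r<\min(\rho,r_0)$ we have ${\eufrak B}_{r}(\omega_1)\subseteq{\eufrak B}_{2\rho}(\omega_0)$, so when $B_0=\omega_1$ the exit-time definition forces $B_s\in{\eufrak B}_{2\rho}(\omega_0)$ for all $s\leq\tau_r$, whence $\Delta^\vide F(B_s)\geq\delta$ on $[0,\tau_r]$ and
$$
 0 \geq \tilde{\E}\!\left[\int_0^{\tau_r}\Delta^\vide F(B_s)\,ds \;\Big|\; B_0=\omega_1\right]
 \geq \delta\,\tilde{\E}[\tau_r\mid B_0=\omega_1]
 = \frac{\delta r^2}{\sum_{n\geq 1}\|h_n\|_{H}^{-2}} > 0,
$$
a contradiction. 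Therefore $\Delta^\vide F\leq 0$ everywhere on $\Omega$, and in particular $\P$-a.s., which is exactly \eqref{sph}.

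I expect the only delicate points to be (i) the finiteness $\tilde{\E}[\tau_r\mid B_0=\omega]<\infty$ in the infinite-dimensional setting, which is handled by the trace-class bound $\sum_{n\geq1}\|h_n\|_{H}^{-2}<\infty$ already implicit in the construction of $(B_t)_{t\in\real_+}$ as an $H$-valued Wiener process, and (ii) the passage from the $\P^\sigma_u$-almost sure hypothesis \eqref{jkl} to a genuinely pointwise conclusion, which is precisely where the continuity of $\Delta^\vide F$ and the full support of the Gaussian measure are used. The remainder is a routine application of the Dynkin formula recalled above.
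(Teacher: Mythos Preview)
Your argument is correct. The paper, however, takes a shorter route: it invokes Dynkin's \emph{characteristic operator} formula (Remark~3, p.~134 of \cite{Dynkin}),
\[
 \frac{1}{2}\Delta^\vide F(\omega)
 = \lim_{n\to\infty}
 \frac{\tilde{\E}[F(B_{\tau_{1/n}})\mid B_0=\omega]-F(\omega)}{\tilde{\E}[\tau_{1/n}\mid B_0=\omega]},
\]
and reads off $\Delta^\vide F(\omega)\le 0$ directly, since hypothesis~\eqref{jkl} makes the numerator nonpositive for all large $n$ while the denominator is positive. You instead use the \emph{integral} form of Dynkin's formula (which the paper had just recalled), obtain $\tilde{\E}[\int_0^{\tau_r}\Delta^\vide F(B_s)\,ds\mid B_0=\omega]\le 0$, and then localise by contradiction via continuity of $\Delta^\vide F$ and full support of the Gaussian measure. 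Your route is more self-contained---it does not appeal to an external limit theorem and makes explicit both why $\tilde{\E}[\tau_r\mid B_0=\omega]<\infty$ is needed and where exactly the continuity hypothesis enters---at the price of being longer. The paper's route is quicker but hides the role of continuity inside the cited Dynkin result (which is what guarantees the limit exists and equals $\tfrac12\Delta^\vide F$). Note also that your argument actually yields the pointwise conclusion $\Delta^\vide F\le 0$ on all of $\Omega$, slightly stronger than the $\P$-a.s.\ statement~\eqref{sph}.
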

\begin{Proof}
 From Remark~3, page 134 of \cite{Dynkin}, we have
\begin{equation}
\label{dk}
 \frac{1}{2}
 \Delta^\vide F ( \omega )
 =
 \lim_{n\to \infty}
 \frac{ \tilde{\E}
 [
 F ( B_{\tau_{1/n}} )
 \mid B_0 = \omega
 ]
 - 
 F ( \omega )
}{\tilde{\E} [\tau_{1/n} \mid B_0 = \omega ]} ,
\end{equation}
 which shows that $\Delta^\vide F \leq 0$ when \eqref{jkl}
 is satisfied.
\end{Proof}
\noindent 
 This yields in particular the following class of 
 $\Delta^\vide$-superharmonic functionals. 
\begin{prop} 
 Let the potential of $F\geq 0$ be defined by 
\begin{equation} 
\label{hj} 
 G (\omega ) = \int_0^{+\infty}
 \tilde{\E}
 [
 F ( B_t ) \mid B_0 = \omega
 ]
 dt 
, \qquad
 \P^\sigma_u (d\omega )-a.s. 
, 
\end{equation} 
 assume that $G\in \Dom ( \Delta^\vide ) $ and 
 that $\Delta^\vide G$ is continuous on $\Omega$. 
 Then $G$ is a $\Delta^\vide$-superharmonic on $\Omega$. 
\end{prop}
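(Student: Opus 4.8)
The plan is to reduce to the previous proposition by verifying the super-mean-value inequality \eqref{jkl} for $G$, and the only real input needed is the (strong) Markov property of the $H^\vide$-valued Wiener process $(B_t)_{t\in\real_+}$ together with the nonnegativity of $F$. First I would record the semigroup identity obtained from the ordinary Markov property: writing $P_sF(\omega) = \tilde{\E}[F(B_s)\mid B_0=\omega]$, one has for the potential
$$
 P_s G (\omega)
 =
 \tilde{\E}\!\left[
 \int_0^{+\infty} P_t F ( B_s ) \, dt
 \,\Big|\, B_0 = \omega
 \right]
 =
 \int_0^{+\infty}
 P_{s+t} F (\omega ) \, dt
 =
 G(\omega )
 -
 \int_0^s P_t F (\omega ) \, dt
 \leq
 G(\omega )
,
$$
the last inequality because $F\geq 0$ forces $P_tF\geq 0$ for every $t$. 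This already gives $P_sG\leq G$, i.e. $G$ is excessive for the semigroup.

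Next I would upgrade this to the stopping time $\tau_r$ by the strong Markov property of $(B_t)_{t\in\real_+}$. Conditioning on $\mathcal{F}_{\tau_r}$ and using $B_{\tau_r+\cdot}$ as a fresh copy of the process started at $B_{\tau_r}$,
$$
 \tilde{\E}\!\left[ G( B_{\tau_r}) \mid B_0 = \omega \right]
 =
 \tilde{\E}\!\left[
 \int_0^{+\infty} F ( B_{\tau_r + t}) \, dt
 \,\Big|\, B_0 = \omega
 \right]
 =
 \tilde{\E}\!\left[
 \int_{\tau_r}^{+\infty} F ( B_t ) \, dt
 \,\Big|\, B_0 = \omega
 \right]
 =
 G(\omega )
 -
 \tilde{\E}\!\left[
 \int_0^{\tau_r} F ( B_t ) \, dt
 \,\Big|\, B_0 = \omega
 \right]
.
$$
Since $F\geq 0$ the subtracted term is nonnegative, hence $G(\omega)\geq \tilde{\E}[G(B_{\tau_r})\mid B_0=\omega]$ for every $r>0$, $\P^\sigma_u(d\omega)$-a.s. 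With the standing assumptions that $G\in\Dom(\Delta^\vide)$ and that $\Delta^\vide G$ is continuous on $\Omega$, the previous proposition applies and yields $\Delta^\vide G\leq 0$ on $\Omega$, i.e. $G$ is $\Delta^\vide$-superharmonic.

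The main obstacle I anticipate is not the algebra above but the measure-theoretic bookkeeping on the infinite-dimensional space: one should check that $G$ is finite (the potential is well defined), that Fubini is legitimate when interchanging $\int_0^\infty dt$ with $\tilde{\E}[\,\cdot\mid B_0=\omega]$ (guaranteed by Tonelli since everything is nonnegative), and that $\tau_r$ is a genuine stopping time with $\tilde{\E}[\tau_r\mid B_0=\omega]<\infty$ so that the hypothesis of the preceding proposition — and of Dynkin's formula behind it — is met; the latter follows from the nondegeneracy of the Gaussian structure on the finite-dimensional coordinates cutting out ${\eufrak B}_r(\omega)$. Once these routine points are in place, the excessivity computation and the appeal to the previous proposition close the argument.
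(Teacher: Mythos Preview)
Your proposal is correct and follows essentially the same route as the paper: the key identity $G(\omega)=\tilde{\E}\big[\int_0^{\tau_r}F(B_t)\,dt\mid B_0=\omega\big]+\tilde{\E}[G(B_{\tau_r})\mid B_0=\omega]$ obtained from the strong Markov property, together with $F\ge 0$, yields the super-mean-value inequality, and the previous proposition then gives $\Delta^\vide G\le 0$. Your preliminary semigroup computation $P_sG\le G$ is a harmless warm-up the paper omits, and your remarks on Tonelli and the finiteness of $\tilde{\E}[\tau_r\mid B_0=\omega]$ make explicit points the paper leaves implicit.
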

\begin{Proof}
 For all $r>0$ we have
\begin{eqnarray*} 
 G (\omega )
 & = & 
 \tilde{\E}
 \left[
 \int_0^{\tau_r} F ( B_t )  dt
 \Big|
 B_0 = \omega
 \right]
 +
 \tilde{\E}
 [
 G ( B_{\tau_r } )
 \mid
 B_0 = \omega
 ]
\\ 
& \geq & 
 \tilde{\E}
 [
 G ( B_{\tau_r } )
 \mid
 B_0 = \omega
 ]
, 
\end{eqnarray*} 
 which shows that $G$ is $\Delta^\vide$-superharmonic.
\end{Proof}
\noindent 
 Note that if $F$ is bounded with bounded support 
 in $\Omega$ then $G$ is bounded on $\Omega$, 
 see e.g. Remark~3.5 of \cite{grossp}. 
\\ 

\subsubsection*{Convolution} 
\noindent 
 Positive superharmonic functionals can also be obtained 
 by convolution, i.e. if 
 $F$ is $\Delta^\vide$-superharmonic and $G$ is positive and 
 sufficiently integrable, then
$$
 \omega \mapsto
 \int_\Omega
 G ( \tilde{\omega} )
 F ( \omega - \tilde{\omega} )
 \P^\sigma ( d \tilde{\omega} )
$$ 
 is positive and $\Delta^\vide$-superharmonic.
\subsubsection*{Cylindrical functionals} 
\noindent 
 Superharmonic functionals on Gaussian space can also
 be constructed as cylindrical functionals, 
 by composition with finite-dimensional functions. 
 Here we use the expansions of case $(A)$. 
 From the expression of $\Delta$ on cylindrical functionals 
$$ 
 \Delta F 
 = 
 \sum_{i=1}^n
 \partial_i^2 f_n 
 \left( 
 \lambda_1^{-1} 
 X^u ( h_1 ) 
 ,
 \ldots
 , 
 \lambda_n^{-1} 
 X^u ( h_n ) 
 \right) 
, 
$$ 
 we check that 
$$F 
 = 
 f_n 
 \left( 
 \lambda_1^{-1} 
 X^u ( h_1 ) 
 ,
 \ldots
 , 
 \lambda_n^{-1} 
 X^u ( h_n ) 
 \right) 
$$ 
 is superharmonic on $\Omega$ if and only if $f_n$ is 
 superharmonic on $\real^n$. 
 Given $a\in \real$ and $b\in \real^n$, let
 $f_{n,a,b} : \real^n \to \real$ be defined as
$$ 
 f_{n,a,b} (x_1,\ldots ,x_n) = 
 \Vert x + b \Vert^a 
 = 
 ((x_1+b_1)^2+\cdots + (x_n+b_n)^2)^{a/2}
,
$$ 
 then $\sqrt{f_{n,a,b}}$ is superharmonic on $\real^n$, 
 $n\geq 3$, if and only if $a\in [4-2n,0]$. 
 Let 
$$F_{n,a,b} = f_{n,a,b} \left( 
 \lambda_1^{-1} X^u ( h_1 ) 
 ,
 \ldots
 , 
 \lambda_n^{-1} X^u ( h_n ) 
 \right) 
. 
$$ 
 We have 
$$ 
 D_t \log F_{n,a,b} 
 = 
 a 
 \sum_{i=1}^n
 \frac{
 \lambda_i^{-1} 
 \Gamma h_i (t) 
 \left( 
 b_i 
 + 
 \lambda_i^{-1} X^u ( h_i ) 
 \right) 
}{
 \left|
  b_1 + 
 \lambda_1^{-1} 
 X^u ( h_1 ) 
 \right|^2
 +
 \cdots
 +
 \left|
 b_n 
 + 
 \lambda_n^{-1} 
 X^u ( h_n ) 
 \right|^2
} 
, 
$$ 
 and 
\begin{eqnarray*} 
 \Delta \sqrt{F_{n,a,b}}
 & = & 
 \sum_{i=1}^n
 \partial_i^2
 \sqrt{f_{n,a,b}} 
 \left(
 \lambda_1^{-1} X^u ( h_1 ) 
 , 
 \ldots
 , 
 \lambda_n^{-1} X^u ( h_n ) 
 \right)
, 
\end{eqnarray*} 
 since $( \Gamma h_k )_{k \geq 1}$ is orthogonal in 
 $L^2([0,T] , dt )$, hence 
\begin{eqnarray*} 
\label{eq:laplacienracineFsurracineF} 
 \frac{\Delta 
 \sqrt{F_{n,a,b}}}{\sqrt{F_{n,a,b}}} 
 & = & 
 \frac{ 
 a ( n -2 + a/2 )/2}{ 
 \left| 
 b_1 
 + 
 \lambda_1^{-1} X^u ( h_1 ) 
 \right|^2 
 + 
 \cdots 
 + 
 \left| 
 b_n 
 + 
 \lambda_n^{-1} 
 X^u ( h_n ) 
 \right|^2 
} 
, 
\end{eqnarray*} 
 is negative 
 if $4-2n \leq a \leq 0$, which is minimal for $a=2-n$. 
 We also have 
\begin{eqnarray*} 
 \frac{\Delta 
 F_{n,a,b}}{F_{n,a,b}} 
 & = & 
 \frac{ 
 a ( n+ a - 2 )}{
 \left| 
 b_1 
 + 
 \lambda_1^{-1} 
 X^u ( h_1 ) 
 \right|^2 
 + 
 \cdots 
 + 
 \left| 
 b_n 
 + 
 \lambda_n^{-1} 
 X^u ( h_n ) 
 \right|^2 
} 
, 
\end{eqnarray*} 
 which is negative for $a\in [2-n,0]$ and vanishes for $a=2-n$. 
 In this case the estimator is given by 
$$ 
 D_t \log F_{n,2-n,b} 
 = 
 - 
 (n-2) 
 \sum_{i=1}^n 
 \frac{ 
 \lambda_i^{-1} 
 \left( 
 b_i 
 + 
 \lambda_i^{-1} 
 X^u ( h_i ) 
 \right) 
 \Gamma h_i ( t ) 
}{ 
 \left| 
 b_1 + 
 \lambda_1^{-1} 
 X^u ( h_1 ) 
 \right|^2 
 + 
 \cdots 
 + 
 \left| 
 b_n 
 + 
 \lambda_n^{-1} 
 X^u ( h_n ) 
 \right|^2 
} 
, 
$$
 and from Proposition~\ref{prop:erreurcylindricalfunctions.1}, 
 inequality \eqref{sh} actually also holds as an equality: 
\begin{equation} 
\label{hldseq} 
 \E_u 
 \left[ 
 \Vert X + 
 D \log F_{n,2-n,b} - u \Vert_{L^2([0,T] , dt )}^2 
 \right] 
 = 
 \crb (\sigma,\mu, \hat{u} ) 
 - 
 \E_u \left[ 
 \int_0^T 
 | D_t \log F_{n,2-n,b} |^2 
 dt \right] 
, 
\end{equation} 
 with 
\begin{equation} 
\label{eq:est2} 
 \Vert D \log F_{n,2-n,b} \Vert_{L^2([0,T] , dt )}^2 
 = 
 \frac{ (n-2)^2 }{ \left| 
 b_1 + 
 \lambda_1^{-1} 
 X^u ( h_1 ) 
 \right|^2 
 + 
 \cdots 
 + 
 \left| 
 b_n 
 + 
 \lambda_n^{-1} 
 X^u ( h_n ) 
 \right|^2 
} 
. 
\end{equation} 
\noindent 
 Note that when $u$ is deterministic, any superharmonic functional of the form 
$$ 
 f_n 
 \left(
 \lambda_1^{-1} 
 X^u ( h_1 ) 
 ,
 \ldots
 ,
 \lambda_n^{-1} 
 X^u ( h_n ) 
 \right) 
, 
$$ 
 can be replaced with 
$$ 
 f_n 
 \left(
 \lambda_1^{-1} 
 X ( h_1 ) 
 ,
 \ldots
 ,
 \lambda_n^{-1} 
 X ( h_n ) 
 \right) 
, 
$$ 
 which retains the same harmonicity property, 
 and can be directly computed from an observation of $X$. 
\\ 

\noindent 
 The Stein type estimator of $u$ is given by 
$$ 
 X_t + D_t \log F_{n,2-n,b}, \qquad t \in [0,T], 
$$ 
 with 
$$ 
 b_i = \lambda_i^{-1} \langle u , h_i\rangle, 
 \qquad i=1,\ldots , n, 
$$ 
 i.e. 
$$ 
 D_t \log F_{n,2-n,b} 
 = 
 - 
 (n-2) 
 \frac{[\Pi_n X]_t }{\Vert \Pi_n X \Vert_{L^2([0,T] , dt )}^2} 
, 
$$ 
 where $\Pi_n$ denotes the orthogonal projection 
$$ 
 \Pi_n X (t) 
 : = 
 \sum_{k=1}^n 
 \lambda_k^{-1} 
 X (h_k) 
 \Gamma h_k (t) 
 = 
 \sum_{k=1}^n 
 \lambda_k^{-1} 
 \left( 
 b_k 
 +  
 \lambda_k^{-1} 
 X^u( h_k ) 
 \right) 
 \Gamma h_k (t) 
. 
$$ 
 We have 
\begin{eqnarray*} 
 \Vert D \log F_{n,2-n,b} \Vert_{L^2 ([0,T]\times \Omega , \P_u \otimes dt )}^2 
 & = & 
 - 4 
 \E_u \left[ 
 \frac{\Delta \sqrt{F_{n,2-n,b}}}{\sqrt{F_{n,2-n,b}}} \right]
\\ 
 & = & 
 (n-2)^2 
 \E_u \left[ 
 \frac{ 1 }{ \left|
 \lambda_1^{-1} X ( h_1 ) 
 \right|^2 
 + 
 \cdots
 +
 \left| 
 \lambda_n^{-1}  X ( h_n ) 
 \right|^2
} 
 \right] 
\\ 
 & = & 
 (n-2)^2 
 \E_u \left[ 
 \Vert \Pi_n X \Vert_{L^2([0,T] , dt )}^{-2} 
 \right] 
, 
\end{eqnarray*} 
 and 
$$ 
 \E_u
 \left[
 \Vert X 
 + 
 D \log F_{n,2-n,b} - u \Vert_{L^2([0,T] , dt )}^2
 \right] 
 = 
 \crb (\sigma,\mu, \hat{u} ) 
 - 
 (n-2)^2 
 \E_u \left[ 
 \Vert \Pi_n X \Vert_{L^2([0,T] , dt )}^{-2} 
 \right] 
. 
$$ 
 Note that the estimator 
$$ 
 X_t - (n-2) 
 \frac{[\Pi_n X]_t }{\Vert \Pi_n X \Vert_{L^2([0,T] , dt )}^2}, 
 \qquad 
 t\in [0,T], 
$$ 
 is of James-Stein type, but it is not a shrinkage operator. 
 Another difference with James-Stein estimators is that here 
 the denominator consists in a sum of squared Gaussians with different 
 variances. 
\\ 

\noindent 
 Given $(X_t^1)_{t\in [0,T]}, \ldots , (X_t^N )_{t\in [0,T]}$, 
 $N$ independent samples of $(X_t)_{t\in [0,T]}$, the process 
$$\bar{X}_t = \frac{1}{N} 
 \left( 
 X_t^1 + \cdots + X_t^N \right) 
$$ 
 is a Brownian motion with drift $u$ and 
 quadratic variation 
 $\sigma^2_t dt /N$ under $\P_u$, and 
 can be used for both efficient and Stein type 
 estimation. 
\section{Numerical application} 
\label{4} 
\noindent 
 In this section we present numerical simulations which 
 allow us to measure the efficiency of our estimators. 
 We use the framework of case $(A)$ and the 
 superharmonic functionals constructed 
 as cylindrical functionals in the previous section, 
 and we assume that $u \in H$ is deterministic. 
\\ 

\noindent 
 We work in the independent increment framework of Section~\ref{2} 
 and we additionally assume that $\sigma_t = \sigma$ is constant, $t\in [0,T]$, 
 i.e. $(X_t)_{t\in [0,T]}$ is a Brownian motion with variance $\sigma^2$, 
 $\Gamma h (t)= \sigma^2h(t)$, $t\in [0,T]$, and 
$$\crb (\sigma,\mu, \hat{u} ) = \frac{\sigma^2}{2} T^2. 
$$ 
 Letting 
$$ 
 h_n (t) =
\frac{\sqrt{2T}}{
 \sigma 
 \pi 
 \left( n- 1/ 2 \right) 
} 
 \sin \left( \left( 
 n-\frac{1}{2} \right) \frac{ \pi t}{T} 
 \right), 
 \qquad 
 t \in [0,T]
, 
 \quad
  n \geq 1, 
$$ 
 i.e. 
$$\dot{h}_n (t) = 
 \frac{1}{\sigma} 
 \sqrt{\frac{2}{T}} 
 \cos \left( 
 \left( 
 n-\frac{1}{2} \right) \frac{ \pi t}{T} 
 \right), 
 \qquad 
 t \in [0,T], 
 \quad n \geq 1, 
$$ 
 provides an orthonormal basis $(h_n)_{n\geq 1}$ of $H$ 
 such that $(\Gamma h_k)_{k\geq 1}$ is orthogonal in $L^2([0,T] , dt )$, with 
$$ 
 \lambda_n 
 = 
 \frac{\sigma T}{ 
 \pi 
 ( n-1/2 ) }, \qquad n\geq 1 
, 
$$ 
 solution of \eqref{*1}. 
 The estimator of $u$ will be given by 
$$ 
 D_t \log F_{n,2-n,b} 
 = 
 -
 (n-2) 
 \sqrt{\frac{2}{T}} 
 \sum_{k=1}^n
 \frac{ 
 \displaystyle 
 X ( 
 h_k 
 ) 
 }{
 \left|
 \lambda_1^{-1} 
 X ( h_1 ) 
 \right|^2 
 +
 \cdots
 +
 \left| 
 \lambda_n^{-1} X ( h_n ) 
 \right|^2 
} 
 \sin \left( 
 \left( 
 k-\frac{1}{2} 
 \right) 
 \frac{ \pi t}{T} \right) 
, 
$$ 
 For simulation purposes we will use $X+D\log F$, and 
 construct the (nondrifted) Brownian motion $(X^u_t)_{t\in [0,T]}$ 
 via the Paley-Wiener expansion 
\begin{equation} 
\label{pw} 
 X^u_t 
 = \sigma^2 
 \sum_{n=1}^\infty \eta_n h_n (t) 
 = 
 \sigma 
 \frac{\sqrt{2T}}{\pi} 
 \sum_{n=1}^\infty \eta_n 
 \frac{ 
 \sin \left( 
 \left( 
 n-\frac{1}{2} \right) \frac{ \pi t}{T} 
 \right)}{ 
 \left( 
 n-\frac{1}{2} 
 \right) 
 } 
, 
\end{equation} 
 where $(\eta_n)_{n\geq 1}$ are independent standard Gaussian 
 random variables 
 with unit variance under $\P_u$ and 
$$ 
 \eta_n 
 = 
 \int_0^T \dot{h}_n (s) dX^u_s 
, 
 \qquad 
 n\geq 1 
. 
$$ 
 In this case we have 
\begin{eqnarray} 
\label{hldseq1} 
 \lefteqn{ 
 D_t \log F_{n,2-n,b} 
} 
\\ 
\nonumber 
 & = & 
 -
 (n-2) 
 \sqrt{\frac{2}{T}} 
 \sum_{k=1}^n
 \frac{ 
 \displaystyle 
 \eta_k 
 + 
 \langle u , h_k \rangle 
 }{ 
 \displaystyle 
 \sum_{l=1}^n 
 \lambda_l^{-2} 
 \left( 
 \eta_l 
 + 
 \langle u , h_l \rangle 
 \right)^2 
} 
 \sin \left( 
 \left(k-\frac{1}{2} \right) \frac{ \pi t}{T} 
 \right) 
. 
\end{eqnarray} 
\noindent 
\noindent 
 Recall that the improvement obtained in comparison with the 
 efficient estimator $\hat{u}$ is not obtained
 pathwise, but in expectation.
 The gain of the superefficient estimator 
 $X + D \log F_{n,2-n,b}$ compared to the 
 efficient estimator $\hat{u}$ is given by 
$$ 
 G ( u , \sigma , T , n ) 
 : 
 = 
 - \frac{4}{ \crb (\sigma,\mu, \hat{u} ) } 
 \E_u \left[ \frac{\Delta \sqrt{F_{n,2-n,b}}}{\sqrt{F_{n,2-n,b}}} \right] 
$$ 
 as a function of $n \geq 3$. 
 From \eqref{hldseq} and \eqref{hldseq1} we have 
\begin{equation} 
\label{jkl2} 
 G ( u , \sigma , T , n ) 
 = 
 2 
 (n-2)^2 
 \E \left[ 
 \left( 
 \displaystyle 
 \sum_{l=1}^n 
 \left( 
 \pi 
 \left( 
 l 
 - 
 \frac{1}{2} 
 \right) 
 \left( 
 \eta_l 
 + 
 \langle u , h_l \rangle 
 \right) 
 \right)^2 
 \right)^{-1} 
 \right] 
, 
\end{equation} 
 hence $G(u,\sigma , T , n)$ converges to 
\begin{equation} 
\label{gsint} 
 (n-2)^2 
 \frac{8}{\pi^2} 
 \E \left[ 
 \left( 
 \displaystyle 
 \sum_{l=1}^n 
 \left( 
 2 l 
 - 
 1 
 \right)^2 
 \eta_l^2 
 \right)^{-1} 
 \right] 
, 
\end{equation} 
 as $\sigma$ tends to infinity. 
 The quantity \eqref{gsint} can be evaluated as a Gaussian integral 
 to yield \eqref{gsint1}. 
 Unlike in the classical Stein method, we stress that here $n$ becomes 
 a free parameter and there is some interest in determining the values 
 of $n$ which yield the best performance. 
\begin{prop} 
\label{prop:limitesestimateur} 
 For all $\sigma , T>0$, and $u\in H$ we have 
$$ 
 G( u , \sigma , T, n ) 
 \simeq 
 \frac{6}{ n \pi^2} 
$$ 
 as $n$ goes to infinity. 
\end{prop}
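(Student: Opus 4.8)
\noindent
The plan is to read off the asymptotics directly from the closed-form expression \eqref{jkl2}. Writing $a_l := \langle u , h_l \rangle$ and
$$
 T_n := \sum_{l=1}^n \left( l - \frac{1}{2} \right)^2 ( \eta_l + a_l )^2 ,
$$
formula \eqref{jkl2} reads $G ( u , \sigma , T , n ) = 2 ( n - 2 )^2 \pi^{-2} \, \E [ T_n^{-1} ]$, so that, since $6 ( n - 2 )^2 \pi^{-2} n^{-3} \sim 6 ( n \pi^2 )^{-1}$, the assertion $G ( u , \sigma , T , n ) \simeq 6 / ( n \pi^2 )$ is equivalent to the single statement
$$
 \E [ T_n^{-1} ] \, \sim \, \frac{3}{n^3} , \qquad n \to \infty .
$$
Here the $( \eta_l )_{l \geq 1}$ are i.i.d.\ standard Gaussian under $\P_u$, and $\sum_{l \geq 1} a_l^2 < \infty$ because $u \in H$.

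\noindent
To establish this I would use the identity $x^{-1} = \int_0^\infty e^{- t x } \, dt$ ($x > 0$) together with Tonelli's theorem, followed by the change of variable $t = 3 s / n^3$, to get
$$
 \E [ T_n^{-1} ] = \int_0^\infty \E \big[ e^{- t T_n } \big] \, dt = \frac{3}{n^3} \int_0^\infty \E \big[ e^{- 3 s T_n / n^3 } \big] \, ds ,
$$
so that it suffices to show $\int_0^\infty \E [ e^{- 3 s T_n / n^3 } ] \, ds \to 1$. For the pointwise convergence of the integrand I would check that $3 T_n / n^3 \to 1$ in $L^2 ( \P_u )$: a direct moment computation, using $\sum_{l=1}^n ( l - 1/2 )^2 \sim n^3 / 3$, $\sum_{l=1}^n ( l - 1/2 )^2 a_l^2 = O ( n^2 )$, $\sum_{l=1}^n ( l - 1/2 )^4 \sim n^5 / 5$ and the independence of the $\eta_l$, gives
$$
 \E \big[ 3 T_n / n^3 \big] = \frac{3}{n^3} \sum_{l=1}^n \left( l - \frac12 \right)^2 ( 1 + a_l^2 ) \longrightarrow 1 , \qquad \var \big( 3 T_n / n^3 \big) = \frac{9}{n^6} \sum_{l=1}^n \left( l - \frac12 \right)^4 ( 2 + 4 a_l^2 ) = O ( n^{-1} ) .
$$
Hence $e^{- 3 s T_n / n^3 } \to e^{- s }$ in probability for each $s > 0$, and since the integrand lies in $[ 0 , 1 ]$, bounded convergence gives $\E [ e^{- 3 s T_n / n^3 } ] \to e^{- s }$.

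\noindent
The delicate step is then to interchange the limit and the $ds$-integral, i.e.\ to produce an integrable majorant of $s \mapsto \E [ e^{- 3 s T_n / n^3 } ]$ uniform in large $n$. For this I would keep only the top half of the sum: for $n \geq 4$,
$$
 T_n \geq \sum_{l = \lceil n/2 \rceil}^n \left( l - \frac12 \right)^2 ( \eta_l + a_l )^2 \geq \frac{( n - 1 )^2}{4} \sum_{l = \lceil n/2 \rceil}^n ( \eta_l + a_l )^2 \geq \frac{n^2}{8} \sum_{l = \lceil n/2 \rceil}^n ( \eta_l + a_l )^2 ,
$$
and then use the noncentral chi-square Laplace transform $\E [ e^{ - c ( \eta_l + a_l )^2 } ] = ( 1 + 2 c )^{-1/2} e^{ - c a_l^2 / ( 1 + 2 c ) } \leq ( 1 + 2 c )^{-1/2}$, valid for $c \geq 0$, together with the independence of the $\eta_l$ and the fact that the remaining sum has at least $n/2$ terms, to obtain
$$
 \E \big[ e^{- 3 s T_n / n^3 } \big] \leq \left( 1 + \frac{3 s}{4 n} \right)^{- n / 4 } .
$$
Since $( 1 + b / n )^n$ is nondecreasing in $n$ for $b \geq 0$, the right-hand side is nonincreasing in $n$, so for $n \geq 5$ it is bounded by $( 1 + 3 s / 20 )^{- 5 / 4 }$, which is integrable on $\real_+$. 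Dominated convergence then yields $\int_0^\infty \E [ e^{- 3 s T_n / n^3 } ] \, ds \to 1$, hence $\E [ T_n^{-1} ] \sim 3 / n^3$ and finally
$$
 G ( u , \sigma , T , n ) = \frac{ 2 ( n - 2 )^2 }{ \pi^2 } \, \E [ T_n^{-1} ] \, \sim \, \frac{ 6 ( n - 2 )^2 }{ \pi^2 n^3 } \, \sim \, \frac{ 6 }{ n \pi^2 } .
$$
The only genuinely delicate point is the uniform-in-$n$ domination in the last argument; the mean and variance estimates are routine, and the change of variable reduces the whole problem to a law-of-large-numbers statement for $T_n$.
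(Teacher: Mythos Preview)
Your proof is correct and takes a genuinely different route from the paper's. The paper argues that $n(n-2)^2/S_n \to 3/\pi^2$ almost surely (invoking a law of large numbers) and then upgrades to convergence of expectations by proving uniform integrability of $n^3/S_n$: it bounds the second moment via the Girsanov change of measure $\E_u[\,\cdot\,]=\E[\Lambda(u)\,\cdot\,]$ together with Cauchy--Schwarz, thereby reducing to inverse moments of a centered chi-square built from the bottom half of the indices. Your approach instead writes $\E[T_n^{-1}]$ as a Laplace integral, establishes $3T_n/n^3\to 1$ in $L^2$ by an explicit mean/variance computation, and handles the interchange of limit and $ds$-integral with the concrete majorant $(1+3s/20)^{-5/4}$ coming from the noncentral chi-square Laplace transform bound $\E[e^{-c(\eta+a)^2}]\le(1+2c)^{-1/2}$. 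Your route avoids the Girsanov density entirely and makes the law-of-large-numbers step fully quantitative; the paper's route is more compact once one accepts the uniform-integrability framework, but it leans on the change-of-measure machinery already set up earlier in the paper and is somewhat terse about the ``strong law'' step for a triangular array with varying weights.
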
 
\begin{Proof} 
 Let 
$$S_n = 
 \sum_{l=1}^n 
 \left( 
 \pi 
 \left( 
 n - l 
 + 
 \frac{1}{2} 
 \right) 
 \left( 
 \eta_l 
 + 
 \langle u , h_l \rangle 
 \right) 
 \right)^2 
, 
 \qquad 
 n \geq 1 
. 
$$ 
 We have 
$$ 
 G ( \alpha , \sigma , T , n ) 
 = 
 2 (n-2)^2 
 \E \left[ 
 \frac{1}{ 
 S_n} 
 \right] 
, 
$$ 
 and by the strong law of large numbers, $n (n-2)^2{S_n}^{-1}$ converges to 
 $3/\pi^2$ as $n$ goes to infinity, since 
$$ 
 \lim_{n\to \infty}
 \frac{\E [S_n]}{n^3}
 = 
 \frac{\pi^2}{4} 
 \lim_{n\to \infty}
 \frac{1}{n^3} 
 \sum_{i=1}^n (2i-1)^2 
 = 
 \frac{\pi^2}{3} 
.
$$ 
 Now for all $n >10$ we have 
\begin{eqnarray*} 
\E_u 
\left[ 
 \left( 
 \frac{(n-2)^3}{S_n} 
 \right)^2 
\right] 
 & = & 
\E 
\left[ 
\Lambda (u) 
 \left( 
 \frac{(n-2)^3}{S_n} 
 \right)^2 
\right] 
\\ 
 & \leq & 
 n^2 \pi^2 
\E 
\left[ 
\Lambda (u)^2 
\right]^{1/2} 
\E 
\left[ 
 \left( 
 \sum_{l=1}^{[n/2]} 
 \left( 
 1 - \frac{l}{n} 
 + 
 \frac{1}{2n} 
 \right)^2 
 \eta_l^2 
 \right)^{-4} 
\right]^{1/2} 
\\ 
 & \leq & 
 n^2 
 \frac{4}{\pi^4} 
\E 
\left[ 
\Lambda (u)^2 
\right]^{1/2} 
\E 
\left[ 
 \left( 
 \sum_{l=1}^{[n/2]} 
 \eta_l^2 
 \right)^{-4} 
\right]^{1/2} 
\\ 
 & \leq & 
 \frac{4n^2}{\pi^4} 
\E 
\left[ 
\Lambda (u)^2 
\right]^{1/2} 
 \left( 
 \prod_{k=1}^4 
 \left( 
 [n/2] - 2 k 
 \right) 
 \right)^{-1/4} 
, 
\end{eqnarray*} 
 hence $n^3/S_n$ is uniformly integrable in $n > 16$, 
 where $[n/2]$ denotes the integer part of $n/2$. 
 This concludes the proof. 
\end{Proof} 
\noindent 
 In the sequel we choose $u_t=\alpha t$, $t\in [0,T]$, $\alpha \in  \real$. 
 Figure~\ref{g1} gives a sample path representation of the process $X+D\log F$. 
\vskip0cm 
\begin{figure}[!ht]
\centering 
\caption{\small $u(t)=t, \; t \in [0,T]; \; n=5$.} 
\label{g1} 
\resizebox*{13cm}{7cm}{\rotatebox{0}{\includegraphics[scale=0.9]{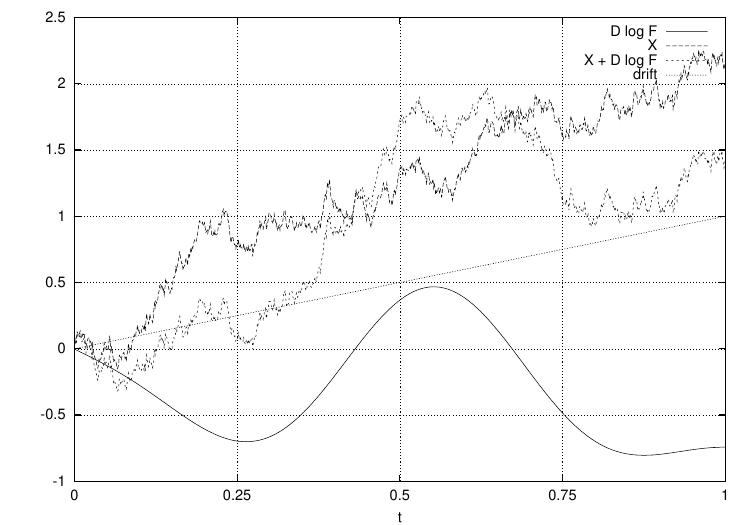}}} 
\end{figure}
\noindent 
 In this case, from \eqref{jkl2} we have 
$$ 
 G ( \alpha , \sigma , T , n ) 
 = 2 
 (n-2)^2 
 \E \left[ 
 \left( 
 \displaystyle 
 \sum_{l=1}^n 
 \left( 
 \pi 
 \left( 
 l 
 - 
 \frac{1}{2} 
 \right) 
 \eta_l 
 - 
 \alpha 
 \frac{\sqrt{2T}}{\sigma} 
 (-1)^l 
 \right)^2 
 \right)^{-1} 
 \right] 
, 
$$ 
 from which it follows that $G(\alpha , \sigma , T , n)$ converges to 
$$ 
 (n-2)^2 
 \frac{8}{\pi^2} 
 \E \left[ 
 \left( 
 \displaystyle 
 \sum_{l=1}^n 
 \left( 
 2 l 
 - 
 1 
 \right)^2 
 \eta_l^2 
 \right)^{-1} 
 \right] 
, 
$$ 
 when $\alpha^{-2}\sigma^2/T$ tends to infinity, and is equivalent to 
$$ 
 \left( 1 -\frac{2}{n} \right)^2 
 \frac{\sigma^2}{\alpha^2 T} 
$$ 
 as $\alpha^{-2}\sigma^2/T$ tends to $0$. 
\noindent 
 Figure~\ref{g3} represents the gain in percentage of the
 superefficient estimator 
 $X + \sigma^2 D \log F_{n,2-n,b}$ 
 compared to the efficient estimator $\hat{u}$ using Monte-Carlo simulations, i.e. 
 we represent $100\times G(\alpha , \sigma , T , n)$ as a function of $n\geq 3$. 
\vskip0cm 
\begin{figure}[!ht]
\centering 
\caption{\small Percentage gain as a function of $n$ for 10000 samples and $\alpha = \sigma = T=1$.} 
\label{g3} 
\resizebox*{13cm}{7cm}{\rotatebox{0}{\includegraphics[scale=0.9]{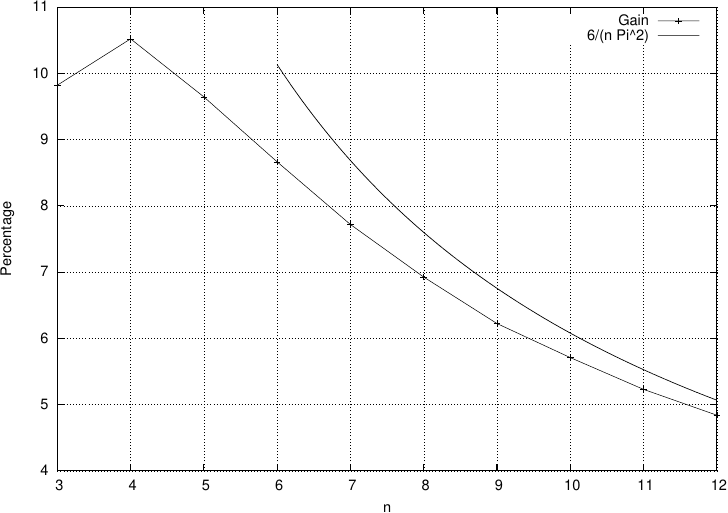}}} 
\end{figure}
\noindent 
 An optimal value 
$$ 
 n_{\rm opt} 
 = \argmax 
 \left\{ 
 G ( \alpha , \sigma , T , n ) 
 ~ : ~
 n\geq 3 
 \right\} 
$$ 
 of $n$ exists in general and is equal to $4$ when $\alpha = \sigma = T = 1$. 
\noindent 
 Figure~\ref{g4} shows the variation of the gain as a function 
 of $n$ and $T$ for $\alpha = \sigma = 1$. 
\begin{figure}[!ht]
\centering
\caption{\small Gain as a function of $n$ and $T$.} 
\label{g4} 
\resizebox*{13cm}{7cm}{\rotatebox{0}{\includegraphics[scale=0.9]{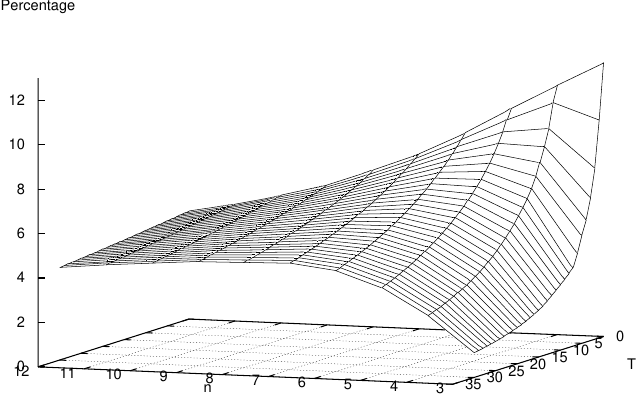}}} 
\end{figure} 
\noindent 
 Figure~\ref{g5} represents the variation of the gain as a function 
 of $n$ and $\sigma$. 
\vskip0cm 
\begin{figure}
\centering
\caption{\small Gain as a function of $n$ and $\sigma$.} 
\label{g5} 
\resizebox*{13cm}{7cm}{\rotatebox{0}{\includegraphics[scale=0.9]{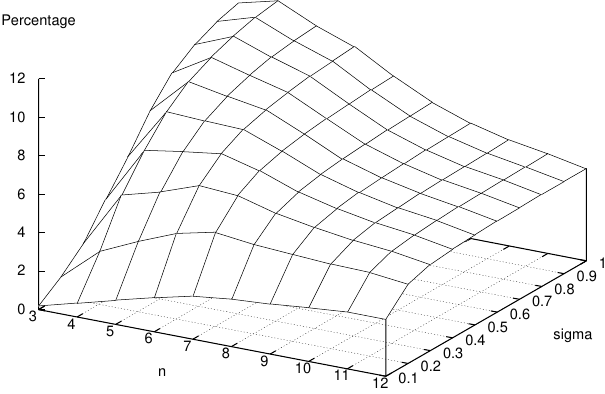}}} 
\\ 
\end{figure} 
\section{Appendix} 
 The next Proposition is classical in the framework of Gaussian filtering 
 and is needed in Section~\ref{2} for Bayes estimation. 
 Its proof is stated for completeness since we did not find it in the literature. 
\begin{prop} 
 Let $Z$ be a Gaussian process with covariance operator $\Gamma_\tau$ 
 and drift $v\in H$, 
 and assume that $X$ is a Gaussian process with drift $Z$ 
 and quadratic covariance operator $\Gamma$ given $Z$. 
 Then, conditionally to $X$, $Z$ has drift 
$$ 
 f
 \mapsto 
 \langle 
 \stackrel{}{\chi}_t 
 , 
 ( \Gamma+\Gamma_\tau)^{-1}\Gamma 
 v 
 \rangle  
 + 
 X ( 
 ( \Gamma+\Gamma_\tau)^{-1}\Gamma_\tau f 
 \stackrel{}{\chi}_t 
 ) 
 \quad 
 \mbox{and covariance} 
 \quad 
 \Gamma_\tau ( \Gamma + \Gamma_\tau )^{-1} \Gamma 
. 
$$ 
\end{prop}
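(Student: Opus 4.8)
The plan is to reduce the assertion to the elementary Gaussian conditioning formula applied through test functions, and then to rewrite the resulting conditional mean in the announced form; this is exactly the content of Lemma~\ref{ljk}, whose conventions I use, the displayed conditional drift being the value at $f=\chi_t$ of the functional $f\mapsto\langle f,(\Gamma+\Gamma_\tau)^{-1}\Gamma v\rangle+X((\Gamma+\Gamma_\tau)^{-1}\Gamma_\tau f)$. Here ``$Z$ has drift $v$ and covariance $\Gamma_\tau$'' means $\E[Z(f)]=\langle f,v\rangle$ and $\E[(Z(f)-\langle f,v\rangle)(Z(g)-\langle g,v\rangle)]=\langle f,\Gamma_\tau g\rangle$ for $f,g\in H$, while ``$X$ has drift $Z$ and covariance $\Gamma$ given $Z$'' means $\E[X(f)\mid Z]=Z(f)$ and $\E[(X(f)-Z(f))(X(g)-Z(g))\mid Z]=\langle f,\Gamma g\rangle$.

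First I would establish that the pair $(Z,X)$ is jointly Gaussian, which follows because $Z$ is Gaussian and, conditionally on $Z$, the family $\{X(f):f\in H\}$ is Gaussian with mean functional affine in $Z$ and covariance not depending on $Z$; this is checked on the joint characteristic function of any finite collection $Z(f_1),\dots,Z(f_k),X(g_1),\dots,X(g_m)$ by conditioning on $Z$ and noting that the conditional variance is deterministic. Using the tower property I would then compute the first two moments: $\E[Z(f)]=\E[X(f)]=\langle f,v\rangle$, $\Cov(Z(f),Z(g))=\langle f,\Gamma_\tau g\rangle$, $\Cov(Z(f),X(g))=\E[Z(f)Z(g)]-\langle f,v\rangle\langle g,v\rangle=\langle f,\Gamma_\tau g\rangle$, and $\Cov(X(f),X(g))=\E[\langle f,\Gamma g\rangle+Z(f)Z(g)]-\langle f,v\rangle\langle g,v\rangle=\langle f,(\Gamma+\Gamma_\tau)g\rangle$. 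Hence, as operators, $Z$ has covariance $\Gamma_\tau$, $X$ has covariance $\Gamma+\Gamma_\tau$, and the cross-covariance of $Z$ with $X$ is $\Gamma_\tau$.

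Second I would introduce the residual
$$
R:=Z-v-\Gamma_\tau(\Gamma+\Gamma_\tau)^{-1}(X-v)
$$
and check that $\Cov(R,X)=\Gamma_\tau-\Gamma_\tau(\Gamma+\Gamma_\tau)^{-1}(\Gamma+\Gamma_\tau)=0$; since $(R,X)$ is jointly Gaussian, $R$ is independent of $X$. A short computation with the moments above gives $\Cov(R)=\Gamma_\tau-\Gamma_\tau(\Gamma+\Gamma_\tau)^{-1}\Gamma_\tau=\Gamma_\tau(\Gamma+\Gamma_\tau)^{-1}\Gamma$. Writing $Z=R+v+\Gamma_\tau(\Gamma+\Gamma_\tau)^{-1}(X-v)$, it follows that, conditionally on $X$, the process $Z$ is Gaussian with covariance $\Gamma_\tau(\Gamma+\Gamma_\tau)^{-1}\Gamma$ and mean functional $f\mapsto\langle f,v\rangle+X((\Gamma+\Gamma_\tau)^{-1}\Gamma_\tau f)-\langle(\Gamma+\Gamma_\tau)^{-1}\Gamma_\tau f,v\rangle$. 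Finally, using the self-adjointness of $\Gamma_\tau$ and of $(\Gamma+\Gamma_\tau)^{-1}$ together with $\Gamma=(\Gamma+\Gamma_\tau)-\Gamma_\tau$, this mean functional equals $f\mapsto\langle f,(\Gamma+\Gamma_\tau)^{-1}\Gamma v\rangle+X((\Gamma+\Gamma_\tau)^{-1}\Gamma_\tau f)$, and evaluating at $f=\chi_t$ yields the conditional drift of the statement.

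The routine parts are the moment computations and the algebraic identities $\Gamma_\tau-\Gamma_\tau(\Gamma+\Gamma_\tau)^{-1}\Gamma_\tau=\Gamma_\tau(\Gamma+\Gamma_\tau)^{-1}\Gamma$ and $\langle f,v\rangle-\langle(\Gamma+\Gamma_\tau)^{-1}\Gamma_\tau f,v\rangle=\langle f,(\Gamma+\Gamma_\tau)^{-1}\Gamma v\rangle$. The main obstacle is the infinite-dimensional bookkeeping: one must ensure that $\Gamma+\Gamma_\tau$ is boundedly invertible so that $\Gamma_\tau(\Gamma+\Gamma_\tau)^{-1}$ and $(\Gamma+\Gamma_\tau)^{-1}\Gamma_\tau$ are well-defined bounded operators, and that joint Gaussianity of $(Z,X)$ — hence the passage from uncorrelatedness to independence of $R$ and $X$, and the identification of the conditional law of $Z$ given $X$ with the law of $R$ shifted by an $X$-measurable term — is properly justified, for instance by reducing everything to finite-dimensional marginals.
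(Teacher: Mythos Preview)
Your argument is correct and complete, but it follows a different route from the paper. The paper works entirely with characteristic functions: it computes $\E[\exp(iX(g))\exp(iZ(f))]$ by first conditioning on $Z$, then completes the square in the resulting Gaussian exponent so as to rewrite the expression in the form $\E[\exp(iX(g))\,\psi(X,f)]$ for an explicit $\psi$, and reads off $\E[\exp(iZ(f))\mid X]=\psi(X,f)$ directly. Your approach instead computes the joint covariance structure of $(Z,X)$, forms the linear regression residual $R=Z-v-\Gamma_\tau(\Gamma+\Gamma_\tau)^{-1}(X-v)$, and uses joint Gaussianity to pass from $\Cov(R,X)=0$ to independence. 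The paper's calculation is self-contained and sidesteps any explicit appeal to joint Gaussianity or finite-dimensional reduction, at the price of a somewhat opaque algebraic manipulation of the exponent; your projection argument is more transparent structurally and makes the operator identities $\Gamma_\tau-\Gamma_\tau(\Gamma+\Gamma_\tau)^{-1}\Gamma_\tau=\Gamma_\tau(\Gamma+\Gamma_\tau)^{-1}\Gamma$ and $I-(\Gamma+\Gamma_\tau)^{-1}\Gamma_\tau=(\Gamma+\Gamma_\tau)^{-1}\Gamma$ do the work, but it needs the preliminary joint-Gaussianity step and, as you note, the invertibility of $\Gamma+\Gamma_\tau$ and the passage through finite-dimensional marginals to be made precise. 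Both arguments rely on the same implicit hypothesis that $(\Gamma+\Gamma_\tau)^{-1}$ is well defined.
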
 
\begin{Proof} 
 For convenience of notation, let 
$$ 
 V(f) 
 := 
 \langle 
 f
 , 
 v 
 \rangle  
, 
 \qquad 
 f\in H 
. 
$$ 
 For all $f, g \in H$ we have: 
\begin{eqnarray*} 
 \E\left[ 
 \exp 
 \left( 
 i 
 X ( 
 f 
 ) 
 \right) 
 \right] 
 & = & 
 \E\left[ 
 \E\left[ 
 \exp 
 \left( 
 i 
 X ( f ) 
 \right) 
 \Big | 
 Z 
 \right] 
 \right] 
\\ 
 & = & 
 \E\left[ 
 \exp 
 \left( 
 i 
 Z ( f ) 
 - 
 \frac{1}{2} 
 \langle 
 f 
 , 
 \Gamma 
 f \rangle 
 \right) 
\right] 
\\ 
 & = & 
 \exp 
 \left( 
 i 
 V ( f ) 
 - 
 \frac{1}{2} 
 \langle 
 f 
 , 
 ( 
 \Gamma_\tau 
 + 
 \Gamma 
 ) 
 f \rangle 
\right)
, 
\end{eqnarray*} 
 and 
\begin{eqnarray*} 
\lefteqn{ 
 \E\left[ 
 \exp 
 \left( 
 i 
 X ( 
 g 
 ) 
 \right) 
 \exp 
 \left( 
 i 
 Z ( f  ) 
 \right) 
 \right] 
 } 
\\ 
 & = & 
 \E\left[ 
 \exp 
 \left( 
 i 
 Z ( f ) 
 \right) 
 \E\left[ 
 \exp 
 \left( 
 i 
 X ( g ) 
 \right) 
 \Big | 
 Z 
 \right] 
 \right] 
\\ 
 & = & 
 \E\left[ 
 \exp 
 \left( 
 i 
 Z ( 
 f + 
 g
 ) 
 - 
 \frac{1}{2} 
 \langle 
 g , \Gamma g 
 \rangle 
 \right) 
\right] 
\\ 
 & = & 
 \exp 
 \left( 
 - 
 \frac{1}{2} 
 \langle 
 f+g,
 \Gamma_\tau (f+g) 
 \rangle 
 - 
 \frac{1}{2} 
 \langle 
 g 
 , 
 \Gamma 
 g 
 \rangle 
 + 
 i 
 V ( f+g ) 
\right)
\\ 
 & = & 
 \exp 
 \left( 
 i 
 V ( 
 (\Gamma + \Gamma_\tau )^{-1} \Gamma_\tau 
 f 
 ) 
 + 
 i 
 V ( 
 g 
 + 
 (\Gamma + \Gamma_\tau )^{-1} 
 \Gamma 
 f 
 ) 
 - 
 \frac{1}{2} 
 \langle 
 \Gamma_\tau f , (\Gamma +\Gamma_\tau )^{-1} 
 \Gamma f 
 \rangle 
\right. 
\\ 
& & \left. 
 - 
 \frac{1}{2} 
 \langle 
 g 
 + 
 (\Gamma + \Gamma_\tau )^{-1} 
 \Gamma_\tau 
 f 
 , 
 (\Gamma + \Gamma_\tau) 
 ( 
 g 
 + 
 (\Gamma + \Gamma_\tau )^{-1} 
 \Gamma_\tau 
 f 
 ) 
 \rangle 
 \right)
\\  
 & = & 
 \E\left[ 
 \exp 
 \left( 
 i 
 X ( g ) 
 \right) 
\right. 
\\ 
& & 
\left. 
 \exp 
 \left( 
 i 
 X ( 
 (\Gamma + \Gamma_\tau )^{-1} 
 \Gamma_\tau f ) 
 + 
 i 
 V((\Gamma +\Gamma_\tau)^{-1}\Gamma f) 
 - 
 \frac{1}{2} 
 \langle 
 \Gamma_\tau f 
 , 
 (\Gamma_\tau + \Gamma )^{-1} 
 \Gamma 
 f 
 \rangle 
 \right)
 \right] 
, 
\end{eqnarray*} 
 which shows that 
\begin{eqnarray*} 
\lefteqn{ 
 \E\left[ 
 \exp 
 \left( 
 i 
 Z (f) 
 \right) 
 \Big | 
 X 
 \right] 
} 
\\ 
 & = & 
 \exp 
 \left( 
 i 
 V ( 
 (\Gamma + \Gamma_\tau )^{-1} 
 \Gamma 
 f ) 
 + 
 i 
 X ( 
 (\Gamma +\Gamma_\tau)^{-1}\Gamma_\tau 
 f ) 
 - 
 \frac{1}{2} 
 \langle 
 \Gamma_\tau f 
 , 
 (\Gamma + \Gamma_\tau )^{-1} 
 \Gamma 
 f \rangle 
 \right)
. 
\end{eqnarray*} 
\end{Proof} 
\noindent 
 In particular we get the following corollary 
 which is classical in the framework of Gaussian filtering. 
\begin{prop} 
\label{ljk2} 
 Let $(Z_t)_{t\in [0,T]}$ be a Brownian motion with quadratic variation $\tau^2_tdt$, 
 $\tau \in L^2 ([0,T] , dt )$, and drift $(v_t)_{t\in [0,T]}$, $v\in H$, 
 and let $(X_t)_{t\in [0,T]}$ have drift $(Z_t)_{t\in [0,T]}$ 
 and quadratic variation $(\sigma^2_t)_{t\in [0,T]}$, 
 given $Z$. 
 Then, conditionally to $X$, the process $(Z_t)_{t\in [0,T]}$ has drift 
$$ 
 \int_0^t \frac{\sigma^2_s}{\tau^2_s+\sigma^2_s} dv_s 
 + 
 \int_0^t \frac{\tau^2_s}{\tau^2_s+\sigma^2_s} dX_s 
 \quad 
 \mbox{and variance} 
 \quad 
\int_0^t \frac{\tau^2_s\sigma^2_s}{\tau^2_s+\sigma^2_s} ds,
 \qquad t\in [0,T] 
. 
$$ 
\end{prop}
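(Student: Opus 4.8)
\medskip

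\noindent
\textbf{Proof sketch.}
The plan is to obtain this statement as a direct specialization of the preceding proposition to the independent increment framework of Section~\ref{2}. In that setting the covariance operator of a Brownian motion with quadratic variation $\sigma^2_s\,ds$ is the operator $\Gamma$ on the Cameron--Martin space $H$ (with the reference product $\langle h,g\rangle = \int_0^T \dot h_s \dot g_s\, ds$) given by $\Gamma h(t) = \int_0^t \dot h_s \sigma^2_s\, ds$; likewise $(Z_t)_{t\in[0,T]}$, being a Brownian motion with quadratic variation $\tau^2_s\,ds$ and drift $v$, is a Gaussian process with drift $v\in H$ and covariance operator $\Gamma_\tau$ defined by $\Gamma_\tau h(t) = \int_0^t \dot h_s \tau^2_s\, ds$. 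Thus the general proposition applies with these two operators, and it remains only to make the expressions $(\Gamma+\Gamma_\tau)^{-1}\Gamma v$, $(\Gamma+\Gamma_\tau)^{-1}\Gamma_\tau \chi_t$ and $\Gamma_\tau(\Gamma+\Gamma_\tau)^{-1}\Gamma$ explicit.

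First I would identify $(\Gamma+\Gamma_\tau)^{-1}$. Since $(\Gamma+\Gamma_\tau)h(t) = \int_0^t \dot h_s(\sigma^2_s+\tau^2_s)\, ds$, its inverse sends $g$ to the primitive (vanishing at $0$) of $s\mapsto \dot g_s/(\sigma^2_s+\tau^2_s)$; equivalently, acting on derivatives it is multiplication by $(\sigma^2_s+\tau^2_s)^{-1}$. Hence $(\Gamma+\Gamma_\tau)^{-1}\Gamma$ and $(\Gamma+\Gamma_\tau)^{-1}\Gamma_\tau$ act on derivatives as multiplication by the functions $\sigma^2_s/(\sigma^2_s+\tau^2_s)$ and $\tau^2_s/(\sigma^2_s+\tau^2_s)$, both valued in $[0,1]$; in particular these compositions are well-defined bounded operators on $H$ even though $(\Gamma+\Gamma_\tau)^{-1}$ by itself need not be.

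It then suffices to substitute $f=\chi_t$ into the three expressions furnished by the preceding proposition, using $\dot\chi_t = 1_{[0,t]}$, $dv_s = \dot v_s\, ds$, the identity $X_t = X(\chi_t)$, and the representation $X(h) = \int_0^T \dot h_s\, dX_s$ of Section~\ref{2}. This turns the deterministic part of the conditional drift into
$$
 \langle \chi_t, (\Gamma+\Gamma_\tau)^{-1}\Gamma v\rangle
 = \int_0^t \frac{\sigma^2_s}{\tau^2_s+\sigma^2_s}\, dv_s,
$$
the stochastic part into
$$
 X\big((\Gamma+\Gamma_\tau)^{-1}\Gamma_\tau \chi_t\big)
 = \int_0^t \frac{\tau^2_s}{\tau^2_s+\sigma^2_s}\, dX_s,
$$
and the conditional covariance operator $\Gamma_\tau(\Gamma+\Gamma_\tau)^{-1}\Gamma$ evaluated at $(\chi_t,\chi_t)$, which is the conditional variance of $Z_t$, into
$$
 \big\langle \chi_t, \Gamma_\tau(\Gamma+\Gamma_\tau)^{-1}\Gamma\,\chi_t\big\rangle
 = \int_0^t \frac{\tau^2_s\sigma^2_s}{\tau^2_s+\sigma^2_s}\, ds,
$$
which are exactly the stated formulas; Gaussianity of the conditional law of $Z$ given $X$ is inherited from the general proposition.

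I do not expect a genuine obstacle here: the argument is essentially a computation once the operators are in hand. The only points requiring a little care are the passage from the abstract conditional covariance operator to the scalar conditional variance of $Z_t$, and checking that the three operator products above remain meaningful when $\sigma^2_s+\tau^2_s$ is neither bounded away from $0$ nor from $+\infty$; both are handled by the observation that the relevant operators reduce to multiplication by functions taking values in $[0,1]$.
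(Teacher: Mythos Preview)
Your proposal is correct and matches the paper's approach exactly: the paper presents Proposition~\ref{ljk2} as an immediate corollary of the general Appendix proposition (Lemma~\ref{ljk}) without giving a separate proof, and you have simply made explicit the specialization to the independent-increment operators $\Gamma h(t)=\int_0^t\dot h_s\sigma_s^2\,ds$ and $\Gamma_\tau h(t)=\int_0^t\dot h_s\tau_s^2\,ds$. Your computations of $(\Gamma+\Gamma_\tau)^{-1}\Gamma$, $(\Gamma+\Gamma_\tau)^{-1}\Gamma_\tau$ and the pairing with $\chi_t$ are all correct.
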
 

\footnotesize 

\def\cprime{$'$} \def\polhk#1{\setbox0=\hbox{#1}{\ooalign{\hidewidth
  \lower1.5ex\hbox{`}\hidewidth\crcr\unhbox0}}}
  \def\polhk#1{\setbox0=\hbox{#1}{\ooalign{\hidewidth
  \lower1.5ex\hbox{`}\hidewidth\crcr\unhbox0}}} \def\cprime{$'$}

\end{document}